\newtheorem{thm}{Theorem}[section]
\newtheorem{lem}[thm]{Lemma}
\newtheorem{cor}[thm]{Corollary}
\newtheorem{pro}[thm]{Proposition}
\newtheorem{ex}[thm]{Example}
\newtheorem{rmk}[thm]{Remark}
\newtheorem{defi}[thm]{Definition}
\newcommand {\emptycomment}[1]{}
\newcommand{\be }{\begin{equation}}
\newcommand{\ee }{\end{equation}}
\newcommand{\g}{\mathfrak g}
\newcommand{\h}{\mathfrak h}
\newcommand{\huaR}{\mathcal{R}}
\newcommand{\huaC}{{\mathcal{C}}}%{\mathcal{C}}
\newcommand{\huaO}{{\mathcal{O}}}
\newcommand{\frkg}{\mathfrak g}
\newcommand{\frks}{\mathfrak s}
\newcommand{\Id}{\rm{Id}}
\newcommand{\br}[1]{   [ \cdot,    \cdot  ]   }
\newcommand{\dM}{\mathrm{d}}
\newcommand{\Hom}{\mathrm{Hom}}
\newcommand{\Sym}{\mathrm{Sym}}
\newcommand{\Nat}{\mathbb N}
\newcommand{\gl}{\mathfrak {gl}}
\newcommand{\ad}{\mathrm{ad}}
\newcommand{\K}{\mathbb{K}}
\newcommand{\CE}{\mathrm{CE}}
\begin{document}

\title[Deformations of morphism pre-Lie algebras ]{Cohomologies and deformations of pre-Lie-morphism triples }

\author{Yibo Wang}
\address{School of Mathematics and Statistics, Northeast Normal University,\\
 Changchun 130024, Jilin, China }
\email{wangyb843@nenu.edu.cn}

\author{Shilong Zhang}
\address{College of Science, Northwest A\&F University, Yangling 712100, Shaanxi, China }
\email{shlzhang11@163.com}

\author{Jiefeng Liu$^{\ast}$}
\address{School of Mathematics and Statistics, Northeast Normal University,\\
 Changchun 130024, Jilin, China }
\email{liujf534@nenu.edu.cn}
\thanks{$^{\ast}$ the corresponding author}
\vspace{-5mm}

%\date{\today}

\begin{abstract}
A (pre-)Lie-morphism triple consists of two (pre-)Lie algebras and a (pre-)Lie algebra homomorphism between them.  We give chomologies of  pre-Lie-morphism triples. As an application, we study the infinitesimal deformations of pre-Lie-morphism triples. Finally, we show that the cohomology of the pre-Lie-morphism triple can be deduced from a new cohomology of the Lie-morphism triple.
 \end{abstract}

\keywords{pre-Lie morphism triple, cohomology, deformation}
\footnotetext{{\it{MSC}}: 17B60, 20G10}

%\subjclass[2015]{17B10, 17B56, 17\huaA42}

%\keywords{$F$-manifold algebra, pre-Lie deformation quantization, pre-$F$-manifold algebra, Rota-Baxter operator}

\maketitle

\tableofcontents

\allowdisplaybreaks

%\end{document}
\section{Introduction}

Pre-Lie algebras are a class of nonassociative algebras coming from the
study of convex homogeneous cones, affine manifolds and affine
structures on Lie groups, and the aforementioned cohomologies of
associative algebras.  They also appeared in many fields of
mathematics and mathematical physics, such as
symplectic structures on Lie groups and Lie algebras, integrable
systems, Poisson brackets and infinite dimensional Lie algebras,
vertex algebras, quantum field theory, operads and numerical analysis. See the survey
\cite{Pre-lie algebra in geometry,Man} and the references therein for
more details.

The cohomology theory is a classical approach  associating
invariants to a mathematical structure.  Cohomology controls
deformations and extension problems of the corresponding algebraic
structures. Cohomology theories of various kinds of algebras have
been developed and studied in \cite{Ch-Ei,Ge0,Har,Hor}. See the
review paper \cite{GLST} for more details. The cohomologies of Lie algebra homomorphisms were introduced by Nijenhuis and Richardson in \cite{ NR1}.  Then the cohomology of two Lie algebras with a Lie algebra homomorphism between them  were studied by Y. Fr\'egier in \cite{Fregier}, which can control simultaneous deformations of these two Lie algebras with the Lie algebra homomorphism. We call two Lie algebras with a Lie algebra homomorphism between them a Lie-morphism triple.  See \cite{Das} for more details on the cohomology of a Lie-morphism triple with coefficients in a representation and its  applications.

In this paper, following the idea  given by Y. Fr\'egier in \cite{Fregier}, we study the cohomologies of pre-Lie-morphism triples. Here  a pre-Lie-morphism triple consists of two pre-Lie algebras and a pre-Lie algebra homomorphism between them. Pre-Lie-morphism triples arise from Rota-Baxter operators, Nijenhuis operators, symplectic Lie algebra homomorphisms, $\mathfrak s$-matrices, Cayley maps and $B$-series. As an application of the cohomology of pre-Lie-morphism triples, we study infinitesimal deformations of  pre-Lie-morphism triples and show that equivalent pre-Lie-morphism triples are in the same cohomology class. We also introduce  the notion of a Nijenhuis pair on a pre-Lie-morphism triple and show that it can generate a trivial infinitesimal deformation. We show that compatible $\huaO$-operators on pre-Lie algebras can give rise to Nijenhuis pairs naturally. See \cite{Dorfman1993,WBLS} for more details about Nijenhuis operators on pre-Lie algebras and Lie algebras. Based on the close relationship between the cohomologies of pre-Lie algebras and the cohomologies of Lie algebras, we find that the cohomology of a pre-Lie-morphism triple  can be deduced from a new cohomology of the Lie-morphism triple.

The paper is organized as follows. In Section \ref{sec:Pre}, we recall representations, cohomologies and homomorphisms of pre-Lie algebras. Then some examples of pre-Lie algebra homomorphisms are given. In Section \ref{sec:main}, we give the cohomology of a pre-Lie-morphism triple and use it to study infinitesimal deformations of the pre-Lie-morphism triple. Furthermore,  we introduce the notion of a Nijenhuis pair  on the pre-Lie-morphism triple , which generates trivial infinitesimal deformations.  At last, we construct Nijenhuis pairs  from compatible $\huaO$-operators on pre-Lie algebras.  In Section \ref{sec:relation}, we study the relationship between the cohomologies of pre-Lie-morphism triples and the cohomologies of Lie-morphism triples.

 In this paper, we work over an algebraically closed field $\K$ of characteristic 0 and all the vector spaces are taken over $\K$. Unless
otherwise noted, all vector spaces we consider will be finite dimensional .
\section{Preliminaries}\label{sec:Pre}
In this section, we recall the notion of  pre-Lie algebra homomorphisms and give some examples.
\begin{defi}  A {\bf pre-Lie algebra} is a pair $(\g,\cdot_\g)$, where $\g$ is a vector space and  $\cdot_\g:\g\otimes \g\longrightarrow \g$ is a bilinear multiplication
satisfying that for all $x,y,z\in \g$, the associator
$(x,y,z)=(x\cdot_\g y)\cdot_\g z-x\cdot_\g(y\cdot_\g z)$ is symmetric in $x,y$,
i.e.
$$(x,y,z)=(y,x,z),\;\;{\rm or}\;\;{\rm
equivalently,}\;\;(x\cdot_\g y)\cdot_\g z-x\cdot_\g(y\cdot_\g z)=(y\cdot_\g x)\cdot_\g
z-y\cdot_\g(x\cdot_\g z).$$
\end{defi}

Let $(\g,\cdot_\g)$ be a pre-Lie algebra. The commutator $
[x,y]_\g=x\cdot_\g y-y\cdot_\g x$ defines a Lie algebra structure
on $\g$, which is called the {\bf sub-adjacent Lie algebra} of
$(\g,\cdot_\g)$ and denoted by $\g^c$. Furthermore,
$L:\g\longrightarrow \gl(\g)$ with $x\rightarrow L_x$, where
$L_xy=x\cdot_\g y$, for all $x,y\in \g$, gives a representation of
the Lie algebra $\g^c$ on $\g$. See \cite{Pre-lie algebra in
geometry} for more details.

\begin{defi}
Let $(\g,\cdot_\g)$ be a pre-Lie algebra and $V$  a vector
space. A {\bf representation} of $\g$ on $V$ consists of a pair
$(\rho,\mu)$, where $\rho:\g\longrightarrow \gl(V)$ is a representation
of the Lie algebra $\g^c$ on $V $ and $\mu:\g\longrightarrow \gl(V)$ is a linear
map satisfying \begin{eqnarray}\label{representation condition 2}
 \rho(x)\mu(y)u-\mu(y)\rho(x)u=\mu(x\cdot_\g y)u-\mu(y)\mu(x)u, \quad \forall~x,y\in \g,~ u\in V.
\end{eqnarray}
\end{defi}

Usually, we denote a representation by $(V;\rho,\mu)$. It is
obvious that $(  \K  ;\rho=0,\mu=0)$ is a representation, which is
called the {\bf trivial representation}. Let $R:\g\rightarrow
\gl(\g)$ be a linear map with $x\longrightarrow R_x$, where the
linear map $R_x:\g\longrightarrow\g$  is defined by
$R_x(y)=y\cdot_\g x,$ for all $x, y\in \g$. Then
$(\g;\rho=L,\mu=R)$ is also a representation, which is called the
{\bf regular representation}. Define two linear maps $L^*,R^*:\g\longrightarrow
\gl(\g^*)$   with $x\longrightarrow L^*_x$ and
$x\longrightarrow R^*_x$ respectively (for all $x\in \g$)
by
\begin{equation}
\langle L_x^*(\xi),y\rangle=-\langle \xi, x\cdot y\rangle, \;\;
\langle R_x^*(\xi),y\rangle=-\langle \xi, y\cdot x\rangle, \;\;
\forall x, y\in \g, \xi\in \g^*.
\end{equation}
Then $(\g^*;\rho={\rm ad}^*=L^*-R^*, \mu=-R^*)$ is a
representation of $(\g,\cdot_\g)$, which is called the
{\bf coregular representation}. In fact, it is the dual
representation of the regular representation $(\g;L,R)$.

The cohomology complex for a pre-Lie algebra $(\g,\cdot_\g)$ with coefficients in a representation  $(V;\rho,\mu)$ is given as follows (\cite{Pre-lie algebra in geometry}). The cohomology of the pre-Lie algebra $\g$ with coefficients in $V$ is the cohomology of the cochain complex $(\otimes_{n\geq 1}C_{\rm preLie}^n(\g,V),\dM)$, where $C^n_{\rm preLie}(\g,V)=\Hom(\wedge^{n-1}\g\otimes \g,V)$ and  the coboundary operator $\dM:C_{\rm preLie}^n(\g,V)\longrightarrow C_{\rm preLie}^{n+1}(\g,V)$ is given by
\begin{eqnarray}\label{eq:pre-Lie cohomology}
\dM f(x_1, \cdots,x_{n+1})
\nonumber&=&\sum_{i=1}^{n}(-1)^{i+1}\rho(x_i)f(x_1, \cdots,\hat{x_i},\cdots,x_{n+1})\\
\nonumber &&+\sum_{i=1}^{n}(-1)^{i+1}\mu(x_{n+1})f(x_1, \cdots,\hat{x_i},\cdots,x_n,x_i)\\
\nonumber&&-\sum_{i=1}^{n}(-1)^{i+1}f(x_1, \cdots,\hat{x_i},\cdots,x_n,x_i\cdot_\g x_{n+1})\\
\label{eq:cobold} &&+\sum_{1\leq i<j\leq n}(-1)^{i+j}f([x_i,x_j]_\g,x_1,\cdots,\hat{x_i},\cdots,\hat{x_j},\cdots,x_{n+1}),
\end{eqnarray}
for all $x_i\in \g,~i=1,\cdots,n+1$. We denote the corresponding $n$-th cohomology group by $H_{\rm preLie}^k(\g,V)$.

\begin{defi}
Let $(\g,\cdot_\g)$ and $(\h,\cdot_\h)$  be two  pre-Lie algebras. A {\bf homomorphism} between $\g$ and $\h$ is a linear map $\phi:\g\to\h$ satisfying
\begin{equation}
\phi(x\cdot_\g y)=\phi(x)\cdot_\h \phi(y),\quad\forall~x,y\in\g.
\end{equation}
Moreover, two pre-Lie algebras $\g$ and $\h$ with a pre-Lie algebra homomorphism $\phi$ is call a {\bf pre-Lie-morphism triple}. We denote it by $((\g,\cdot_\g),(\h,\cdot_\h),\phi)$, or simply $(\g,\h,\phi)$.
\end{defi}
It is obvious that if $\phi$ is a homomorphism from a pre-Lie algebra $\g$ to a pre-Lie algebra $\h$, then $\phi$ is a homomorphism from the  sub-adjacent Lie algebra $\g^c$ to the sub-adjacent Lie algebra $\h^c$. We call the Lie-morphism triple $(\g^c,\h^c,\phi)$ a {\bf sub-adjacent Lie-morphism triple} of the pre-Lie-morphism triple $(\g,\h,\phi)$.

Let $D$ be a derivation on a commutative associative algebra $(A,\cdot_A)$. Then the new product
\begin{equation}\label{eq:AssDer1}
x\ast_D y=x\cdot_A D(y),\quad \forall~x,y\in A
\end{equation}
makes $(A,\ast_D)$ being a pre-Lie algebra by  Gel'fand
\rm\cite{GelDor}.
\begin{ex}{\rm
 Let $D_1$ be a derivation on a commutative associative algebra $(A,\cdot_A)$ and $D_2$ a derivation on a commutative associative algebra $(B,\cdot_B)$. Assume that $f:A\to B$ is an associative algebra homomorphism satisfying $f\circ D_1=D_2\circ f$. Then $f$ is a pre-Lie algebra homomorphism  from the pre-Lie algebra $(A,\ast_{D_1})$ to the pre-Lie algebra $(B,\ast_{D_2})$.}
\end{ex}

Let  $(\g,[\cdot,\cdot]_\g)$ be a Lie algebra. Recall that $\omega\in\wedge^2\g^*$ is a 2-cocycle on $\g$ if
\begin{equation}\label{eq:Lie2-form}
\omega([x,y]_\g,z)+\omega([z,x]_\g,y)+\omega([y,z]_\g,x)=0,\quad \forall~x,y,z\in\g.
\end{equation}

A {\bf symplectic  Lie algebra}, denoted by $(\g,[\cdot,\cdot]_\g,\omega)$, is a Lie algebra  $(\g,[\cdot,\cdot]_\g)$ together with a nondegenerate
  $2$-cocycle $\omega\in\wedge^2\g^*$ on $\g$.

\begin{lem}{\rm(\cite{Chu})}\label{thm:sp}
Let $(\g,[\cdot,\cdot]_\g,\omega)$ be a symplectic Lie algebra.
Then there exists a pre-Lie algebra structure ``$\cdot_\g$'' on
$\g$ given by
\begin{equation}\label{eq:LietoLSA}
\omega(x\cdot_\g y,z)=-\omega(y,[x,z]_\g),\quad \forall ~
x,y,z\in\g,
\end{equation}
such that the sub-adjacent Lie algebra is exactly
$(\g,[\cdot,\cdot]_\g)$ itself.
\end{lem}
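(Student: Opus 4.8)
The plan is to produce the pre-Lie algebra structure on $\g$ directly from formula \eqref{eq:LietoLSA} and then verify the two required properties: first that ``$\cdot_\g$'' is well-defined and satisfies the left-symmetry (pre-Lie) identity, and second that its commutator recovers $[\cdot,\cdot]_\g$. Since $\omega$ is nondegenerate, for fixed $x,y$ the linear functional $z\mapsto -\omega(y,[x,z]_\g)$ determines a unique element which we name $x\cdot_\g y$; bilinearity of $x\cdot_\g y$ in $x$ and $y$ is immediate from bilinearity of $\omega$ and of the bracket. So the only substantive points are the associator symmetry and the sub-adjacent identity.

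For the sub-adjacent identity, I would compute, for all $x,y,z\in\g$,
\begin{equation*}
\omega(x\cdot_\g y - y\cdot_\g x, z) = -\omega(y,[x,z]_\g) + \omega(x,[y,z]_\g).
\end{equation*}
Using the $2$-cocycle condition \eqref{eq:Lie2-form} applied to the triple $(x,y,z)$, namely $\omega([x,y]_\g,z)+\omega([z,x]_\g,y)+\omega([y,z]_\g,x)=0$, and the antisymmetry of $\omega$, one rewrites $-\omega(y,[x,z]_\g)+\omega(x,[y,z]_\g) = \omega([x,z]_\g,y) + \omega([y,z]_\g,x) = -\omega([x,y]_\g,z) = \omega([x,y]_\g,z)$ — here I must be careful with signs, but the cocycle identity is exactly what makes this collapse to $\omega([x,y]_\g,z)$. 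Since $\omega$ is nondegenerate this yields $x\cdot_\g y - y\cdot_\g x = [x,y]_\g$, as desired.

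For the pre-Lie identity, I would pair the associator $(x,y,z) = (x\cdot_\g y)\cdot_\g z - x\cdot_\g(y\cdot_\g z)$ against an arbitrary $w\in\g$ and expand using \eqref{eq:LietoLSA} repeatedly, turning everything into expressions of the form $\omega\big(\,\cdot\,,[\,\cdot\,,[\,\cdot\,,w]_\g]_\g\big)$ together with terms involving $\omega$ of a product against a bracket, which are then expanded once more. The terms should reorganize into contributions of the Jacobi identity for $[\cdot,\cdot]_\g$ (applied inside the last slot) plus terms that are manifestly symmetric in $x,y$; comparing $\omega((x,y,z),w)$ with $\omega((y,x,z),w)$, the non-symmetric part is precisely a Jacobiator $[[x,y]_\g,w]_\g - [x,[y,w]_\g]_\g + [y,[x,w]_\g]_\g = 0$ fed into $\omega(z,-)$, so the two associators agree; nondegeneracy of $\omega$ then gives $(x,y,z)=(y,x,z)$. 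The main obstacle is purely bookkeeping: keeping track of the several sign changes coming from the antisymmetry of $\omega$ and from moving brackets between the two arguments of $\omega$, and correctly identifying which collection of six-or-so terms assembles into a Jacobi identity; a clean way to organize this is to treat $\omega(\,\cdot\,, -)$ as an isomorphism $\g\to\g^*$ and check that the operators $L_x := x\cdot_\g(\,\cdot\,)$ correspond under $\omega$ to $-\ad_x^*$ acting on $\g^*$, reducing everything to the known representation property of the coadjoint action of the Lie algebra $(\g,[\cdot,\cdot]_\g)$.
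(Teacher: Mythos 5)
The paper offers no proof of this lemma at all --- it is quoted from the reference [Chu] --- so your argument stands on its own, and in outline it is the standard and correct one. Both halves work: nondegeneracy of $\omega$ makes $x\cdot_\g y$ well defined; the $2$-cocycle identity gives the sub-adjacent property; and the pre-Lie identity follows most cleanly from your closing observation, since associator symmetry is equivalent (given the first half) to $L_{[x,y]_\g}=[L_x,L_y]$, and the defining relation says exactly that $\omega^\flat\circ L_x=\ad_x^*\circ\omega^\flat$ where $\omega^\flat(y)=\omega(y,\cdot)$ and $\langle\ad^*_x\xi,z\rangle=-\langle\xi,[x,z]_\g\rangle$, so $L$ is conjugate to the coadjoint representation.

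Two points to tighten. First, your displayed chain for the sub-adjacent identity is incoherent as written: you have $\omega(x,[y,z]_\g)=+\omega([y,z]_\g,x)$ (it should be $-\omega([y,z]_\g,x)$) and then the impossible step $-\omega([x,y]_\g,z)=\omega([x,y]_\g,z)$. The correct computation is
\begin{equation*}
\omega(x\cdot_\g y-y\cdot_\g x,z)=\omega([x,z]_\g,y)-\omega([y,z]_\g,x)=\omega([x,y]_\g,z),
\end{equation*}
the last equality being precisely the cocycle condition \eqref{eq:Lie2-form}; no sign flip is needed. Second, in the pre-Lie step you should say explicitly that the symmetry $(x,y,z)=(y,x,z)$ rearranges to $L_{x\cdot_\g y-y\cdot_\g x}=L_xL_y-L_yL_x$, so that the first half (sub-adjacency) is an input to the second half --- your ``Jacobiator bookkeeping'' sketch hides this dependence, whereas the $\ad^*$-conjugation route makes it transparent (the sign you attach to $\ad_x^*$ is a matter of convention; what matters is that the resulting family of operators is a representation of $(\g,[\cdot,\cdot]_\g)$, which it is under the convention above). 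With these repairs the proof is complete.
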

\begin{ex}{\rm
 Let $(\g,[\cdot,\cdot]_\g,\omega_1)$ and $(\h,[\cdot,\cdot]_\h,\omega_2)$ be two symplectic Lie algebras. Assume that $\phi:\g\to \h$ is a symplectic Lie algebra homomorphism, i.e.
 $$\phi([x,y]_\g)=[\phi(x),\phi(y)]_\h,\quad \omega_2(\phi(x),\phi(y))=\omega_1(x,y),\quad\forall~x,y\in\g.$$
 Then $\phi$ is a pre-Lie algebra homomorphism from the pre-Lie algebra $(\g,\cdot_{\g})$ to the pre-Lie algebra $(\h,\cdot_{\h})$.}

\end{ex}

\begin{defi}{\rm(\cite{AB})}
Let $(\g,\cdot_\g)$ be a pre-Lie algebra and $\huaR:\g\longrightarrow
\g$ a   linear operator. If $\huaR$ satisfies \begin{equation}
\huaR(x)\cdot_\g \huaR(y)=\huaR(\huaR(x)\cdot_\g y+x\cdot_\g \huaR(y)+ \lambda x\cdot_\g y),\quad \forall x,y\in \g,\end{equation} then $\huaR$ is called a
{\bf Rota-Baxter operator of weight $\lambda$} on $\g$.
\end{defi}

\begin{ex}{\rm
  Let $\huaR$ be a Rota-Baxter operator of weight $\lambda$ on a pre-Lie algebra $(\g,\cdot_\g)$. Define a new operation $\cdot^\huaR:\g\otimes \g\longrightarrow \g$ by
  $$x\cdot^\huaR y=\huaR(x)\cdot_\g y+x\cdot_\g \huaR(y)+ \lambda x\cdot_\g y,\;\forall x,y\in\g.$$
  Then $(\g,\cdot^\huaR)$ is a pre-Lie algebra and $\huaR$ is a pre-Lie algebra homomorphism from the pre-Lie algebra $(\g,\cdot^\huaR)$ to the pre-Lie algebra $(\g,\cdot_\g)$.}
\end{ex}

\begin{defi}{\rm(\cite{WBLS})}
Let $(\g,\cdot_\g)$ be a pre-Lie algebra. A linear operator $N:\g\longrightarrow \g$ is called a {\bf Nijenhuis operator} if
\begin{equation}\label{eq:Nijenhuis operator}
N(x)\cdot_\g N(y)=N\big(N(x)\cdot_\g y+x\cdot_\g N(y)-N(x\cdot_\g
y)\big),\;\forall x,y\in\g.
\end{equation}
\end{defi}
\begin{ex}{\rm
  Let $(\g,\cdot_\g)$ be a pre-Lie algebra and $N$ a Nijenhuis operator on $\g$. Define a new operation $\cdot_N:\otimes^2\g\to\g$ by
  \begin{eqnarray}
x \cdot_N y =N(x) \cdot_\g y +x \cdot_\g N(y)-N(x \cdot_\g y),\;\forall x,y\in\g.
\end{eqnarray}
Then $(\g,\cdot_N)$ is a pre-Lie algebra and $N$ is a pre-Lie algebra homomorphism from $(\g,\cdot_N)$ to $(\g,\cdot_\g)$.}
\end{ex}

\begin{defi}{\rm(\cite{BaiO})}
Let $(\g,\cdot_\g)$ be a pre-Lie algebra and $(V;\rho,\mu)$ a representation. A linear map $T:V\longrightarrow \g$ is called  an {\bf $\huaO$-operator} on a pre-Lie algebra
$\g$ associated to a representation $(V;\rho,\mu)$ if it satisfies
\begin{equation}
  T(u)\cdot_\g T(v)=T\Big(\rho(T(u))(v)+\mu(T(v))(u)\Big),\quad \forall u,v\in V.
\end{equation}
\end{defi}
Note that a Rota-Baxter operator of weight zero on a
pre-Lie algebra $\g$ is exactly an $\huaO$-operator
associated to the regular representation $(\g;L,R)$.

\begin{ex}\label{ex:O-operators}
{\rm
Let $T$ be an $\huaO$-operator on a pre-Lie algebra
$(\g,\cdot_\g)$ associated to a representation $(V;\rho,\mu)$. Define a new operation $\cdot^T:\otimes^2\g\to\g$ by
  \begin{eqnarray}
x\cdot^Ty=\rho(T(u))(v)+\mu(T(v))(u),\;\forall x,y\in\g.
\end{eqnarray}
Then $(\g,\cdot^T)$ is a pre-Lie algebra and $T$ is a pre-Lie algebra homomorphism from $(\g,\cdot^T)$ to $(\g,\cdot_\g)$.}
\end{ex}

Let $(\g,\cdot_\g)$ be a pre-Lie algebra and $r\in\Sym^2(\g)$. We introduce $\llbracket r,r\rrbracket\in\wedge^2\g\otimes\g$ as follows
\begin{equation}\label{S-equation1}
\llbracket r,r\rrbracket(\xi,\eta,\zeta)=-\langle\xi,r^\sharp(\eta)\cdot_\g r^\sharp(\zeta)\rangle+\langle\eta,r^\sharp(\xi)\cdot_\g r^\sharp(\zeta)\rangle+\langle\zeta,[r^\sharp(\xi),r^\sharp(\eta)]^c\rangle,\quad\forall~\xi,\eta,\zeta\in\g^*,
\end{equation}
where $r^\sharp:\g^*\to \g$ is given by
$\langle r^\sharp(\xi),\eta\rangle=r(\xi,\eta),$ for all $~\xi,\eta\in\g^*$.

\begin{defi}{\rm(\cite{Left-symmetric bialgebras})} Let
$(\g,\cdot_\g)$ be a pre-Lie algebra.  If $r\in\Sym^2(\g)$ and
satisfies $\llbracket r,r\rrbracket=0$, then $r$ is called an {\bf
$\frks$-matrix} in $(\g,\cdot_\g)$.
\end{defi}

\begin{rmk}
An $\frks$-matrix in a pre-Lie algebra $(\g, \cdot_\g)$ is a
solution of the $S$-equation in $(\g, \cdot_\g)$ which is an
analogue of the classical Yang-Baxter equation in a Lie algebra.
It plays an important role in the theory of pre-Lie bialgebras,
see \cite{Left-symmetric bialgebras} for more details.
\end{rmk}

It was shown in \cite{Left-symmetric bialgebras} that $r\in\Sym^2(\g)$ is an $\frks$-matrix in the pre-Lie algebra $\g$ if and only if  $r^\sharp:\g^*\to \g$  is an $\huaO$-operator
associated to the coregular representation $(\g^*;{\rm ad}^*,-R^*)$.
\begin{ex}\label{ex:s-matric}
 {\rm Let $(\frkg,\cdot_\frkg)$ be a pre-Lie algebra and $r$ an $\frks$-matrix. Then $(\g^*,\cdot_r)$ is a pre-Lie algebra, where the multiplication $\cdot^r:\otimes ^2\g^*\longrightarrow\g^*$ is  given by
\begin{equation}\label{eq:pre-bia}
\xi\cdot^{r}\eta=\ad^*_{r^\sharp(\xi)}\eta-R^*_{r^\sharp(\eta)}\xi,\quad \forall~\xi,\eta\in\g^*.
\end{equation}
Furthermore, $r^\sharp:\g^*\longrightarrow \g$ is a  pre-Lie algebra homomorphism from $(\g^*,\cdot^r)$ to $(\g,\cdot_\g)$.}
\end{ex}

\section{Cohomologies and deformations of pre-Lie-morphism triples}\label{sec:main}
In this section, we give the cohomlogy of the pre-Lie-morphism triple and then use it to study the deformations of  the pre-Lie-morphism triple. In particular, we show that a Nijenhuis pair on the pre-Lie-morphism triple generates a trivial deformation. The relations between Nijenhuis pairs and compatible $\huaO$-operator are discussed.
\subsection{Cohomologies  of  pre-Lie-morphism triples}
Let $\phi$ be a homomorphism  between pre-Lie algebras $(\g,\cdot_\g)$ and $(\h,\cdot_\h)$. Define $\rho_\phi:\g\to \gl(\h)$ and $\mu_\phi:\g\to \gl(\h) $ by
\begin{equation}\label{eq:rep}
\rho_\phi(x)u=\phi(x)\cdot_\h u,\quad \mu_\phi(x)u=u\cdot_\h \phi(x),\quad \forall~x\in\g,u\in\h.
\end{equation}
Furthermore, we have
\begin{pro}
The pair $(\rho_\phi,\mu_\phi)$  is a representation of the pre-Lie algebra $\g$ on $\h$.	
\end{pro}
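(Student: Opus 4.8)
The plan is to recognize $(\rho_\phi,\mu_\phi)$ as the pullback along $\phi$ of the regular representation of $\h$, and then verify the two defining axioms of a pre-Lie representation directly. Writing $(L,R)$ for the regular representation of $(\h,\cdot_\h)$, so that $L_ab=a\cdot_\h b$ and $R_ab=b\cdot_\h a$, the formulas \eqref{eq:rep} say precisely that $\rho_\phi(x)=L_{\phi(x)}$ and $\mu_\phi(x)=R_{\phi(x)}$ for all $x\in\g$. More generally, I would first observe that for any pre-Lie algebra homomorphism $\phi\colon\g\to\h$ and any representation $(V;\rho,\mu)$ of $\h$, the pair $(V;\rho\circ\phi,\mu\circ\phi)$ is a representation of $\g$; the proposition is then the special case $(V;\rho,\mu)=(\h;L,R)$.

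For the first axiom, I need $\rho_\phi\colon\g\to\gl(\h)$ to be a representation of the sub-adjacent Lie algebra $\g^c$. Since $\phi$ is a pre-Lie homomorphism it is in particular a homomorphism of sub-adjacent Lie algebras $\g^c\to\h^c$, and $L\colon\h^c\to\gl(\h)$ is a representation of $\h^c$; composing, $\rho_\phi=L\circ\phi$ is a representation of $\g^c$. Explicitly, for $x,y\in\g$ and $u\in\h$,
\[
\rho_\phi([x,y]_\g)u=\phi([x,y]_\g)\cdot_\h u=[\phi(x),\phi(y)]_\h\cdot_\h u=\phi(x)\cdot_\h(\phi(y)\cdot_\h u)-\phi(y)\cdot_\h(\phi(x)\cdot_\h u),
\]
which equals $\big(\rho_\phi(x)\rho_\phi(y)-\rho_\phi(y)\rho_\phi(x)\big)u$.

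For the second axiom I would substitute \eqref{eq:rep} into the condition \eqref{representation condition 2}. Using $\phi(x\cdot_\g y)=\phi(x)\cdot_\h\phi(y)$, the required identity becomes, with $a=\phi(x)$ and $b=\phi(y)$,
\[
a\cdot_\h(u\cdot_\h b)-(a\cdot_\h u)\cdot_\h b=u\cdot_\h(a\cdot_\h b)-(u\cdot_\h a)\cdot_\h b,
\]
i.e. $(a,u,b)=(u,a,b)$ for the associator of $\h$, which is exactly the left-symmetry (pre-Lie) identity in $(\h,\cdot_\h)$. Hence \eqref{representation condition 2} holds for $(\rho_\phi,\mu_\phi)$, completing the argument.

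I expect no serious obstacle here; the computation is routine and essentially reduces everything to the defining identities already available. The only point requiring care is bookkeeping about which structure-preservation property of $\phi$ is invoked where: the Lie-algebra-homomorphism property of $\phi$ is all that is needed for the $\rho_\phi$ part, whereas the full pre-Lie-homomorphism identity $\phi(x\cdot_\g y)=\phi(x)\cdot_\h\phi(y)$ is what makes the $\mu_\phi$ part collapse to the left-symmetry of $\h$.
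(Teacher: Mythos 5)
Your proof is correct and follows essentially the same route as the paper: the $\rho_\phi$ part is reduced to $\phi$ being a homomorphism of sub-adjacent Lie algebras, and the $\mu_\phi$ part collapses, via $\phi(x\cdot_\g y)=\phi(x)\cdot_\h\phi(y)$, to the left-symmetry identity in $(\h,\cdot_\h)$ — exactly the two computations the paper performs. Your additional observation that $(\rho_\phi,\mu_\phi)$ is just the pullback $(L\circ\phi, R\circ\phi)$ of the regular representation is a pleasant conceptual packaging (and a mild generalization), but the underlying verification is identical.
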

\begin{proof}
Since $\phi$ is a pre-Lie algebra homomorphism from $\g$ to $\h$, $\phi$ is a Lie algebra homomorphism from sub-adjacent Lie algebra $\g^c$ to $\h^c$. Then we have
\begin{eqnarray*}
\rho_\phi([x,y]_\g)u&=&	\phi([x,y]_\g)\cdot_\h u\\
&=&(\phi(x)\cdot_\h\phi(y))\cdot_\h u-(\phi(y)\cdot_\h\phi(x))\cdot_\h u\\
&=&\phi(x)\cdot_\h(\rho_\phi(y)\cdot_\h u)-\phi(y)\cdot_\h(\rho_\phi(x)\cdot_\h u)\\
&=&\rho_\phi(x)\rho_\phi(y)u-\rho_\phi(y)\rho_\phi(x)u,
\end{eqnarray*}
which implies that $\rho_\phi([x,y]_\g)=\rho_\phi(x)\rho_\phi(y)-\rho_\phi(y)\rho_\phi(x)$.

Furthermore, since $\h$ is a pre-Lie algebra, we have
\begin{eqnarray*}
  &&\rho_\phi(x)\mu_\phi(y)u-\mu_\phi(y)\rho_\phi(x)u-\mu_\phi(x\cdot_\g y)u+\mu_\phi(y)\mu_\phi(x)u\\
  &=&\phi(x)\cdot_\h(u\cdot_\h\phi(y))-(\phi(x)\cdot_\h u)\cdot_\h\phi(y)-u\cdot_\h(\phi(x)\cdot_\h\phi(y))+(u\cdot_\h\phi(x))\cdot_\h\phi(y)\\
  &=&0,
\end{eqnarray*}
which implies that $\rho_\phi(x)\mu_\phi(y)z-\mu_\phi(y)\rho_\phi(x)z=\mu_\phi(x\cdot_\g y)z-\mu_\phi(y)\mu_\phi(x)z$.

Thus $(\rho_\phi,\mu_\phi)$  is a representation of the pre-Lie algebra $\g$ on $\h$
\end{proof}	

Now we give the cohomology complex for pre-Lie-morphism triples. Let $\phi$ be  pre-Lie algebra homomorphism between pre-Lie algebras $(\g,\cdot_\g)$ and $(\h,\cdot_\h)$.  Define the set of $k$-cochains by
$$\huaC_{\rm preLie}^k(\phi,\phi)=C_{\rm preLie}^{k+1}(\g,\g)\oplus C_{\rm preLie}^{k+1}(\h,\h)\oplus C_{\rm preLie}^{k}(\g,\h),k\geq0,$$
in which we set $C_{\rm preLie}^{0}(\g,\h)=0$.

The coboundary operator $\delta:\huaC_{\rm preLie}^k(\phi,\phi)\to \huaC_{\rm preLie}^{k+1}(\phi,\phi)$ is defined  by
$$\delta_{\rm preLie}(f_1,f_2,f_3)=(\dM_\g f_1,\dM_\h f_2,\dM_\phi f_3+(-1)^k(\phi\circ f_1-\phi^*\circ f_2)),$$
where $\dM_\g$ and $\dM_\h$ are the coboundary operators of the pre-Lie algebra $\g$ and $\h$ associated to their regular representations respectively,  $\dM_\phi$ is the coboundary operator of the pre-Lie algebra $\g$ associated to the representation $(\h;\rho_\phi,\mu_\phi)$, and $f_1\in C_{\rm preLie}^{k+1}(\g,\g),f_2\in C^{k+1}(\h,\h),f_3\in C^{k}(\g,\h)$. Here $\phi\circ f_1\in C_{\rm preLie}^{k+1}(\g,\g)$ and $\phi^*\circ f_2\in C^{k+1}(\h,\h)$ are defined by
\begin{eqnarray*}
(\phi\circ f_1)(x_1,\cdots,x_{k+1})&=&\phi\big( f_1(x_1,\cdots,x_{k+1})\big),\\
(\phi^*\circ f_2)(x_1,\cdots,x_{k+1})&=&f_2 (\phi(x_1),\cdots,\phi(x_{k+1})),
\end{eqnarray*}
where $x_1,\cdots,x_{k+1}\in\g$.
\begin{pro}
	We have $\delta_{\rm preLie}\circ\delta_{\rm preLie}=0$. Hence $(\huaC^*(\phi,\phi)=\otimes_{k\geq 1}C^n(\phi,\phi),\delta_{\rm preLie})$ is a cochain complex.
\end{pro}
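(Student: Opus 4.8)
The plan is to verify $\delta_{\rm preLie}\circ\delta_{\rm preLie}=0$ componentwise on a triple $(f_1,f_2,f_3)\in\huaC_{\rm preLie}^k(\phi,\phi)$. Writing $\delta=\delta_{\rm preLie}$, we have
\[
\delta(f_1,f_2,f_3)=\bigl(\dM_\g f_1,\ \dM_\h f_2,\ \dM_\phi f_3+(-1)^k(\phi\circ f_1-\phi^*\circ f_2)\bigr),
\]
so applying $\delta$ again gives a triple whose first two components are $\dM_\g^2 f_1$ and $\dM_\h^2 f_2$. These vanish immediately, since $\dM_\g$ and $\dM_\h$ are genuine pre-Lie coboundary operators (for the regular representations of $\g$ and $\h$), for which $\dM^2=0$ is already recorded in Section~\ref{sec:Pre}. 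Hence the only real content is the third component.

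The third component of $\delta^2(f_1,f_2,f_3)$ equals
\[
\dM_\phi\bigl(\dM_\phi f_3+(-1)^k(\phi\circ f_1-\phi^*\circ f_2)\bigr)+(-1)^{k+1}\bigl(\phi\circ\dM_\g f_1-\phi^*\circ\dM_\h f_2\bigr).
\]
Using $\dM_\phi^2=0$ (again from Section~\ref{sec:Pre}, now for the representation $(\h;\rho_\phi,\mu_\phi)$ of $\g$), this reduces to
\[
(-1)^k\Bigl[\dM_\phi(\phi\circ f_1)-\phi\circ\dM_\g f_1\Bigr]\;-\;(-1)^k\Bigl[\dM_\phi(\phi^*\circ f_2)-\phi^*\circ\dM_\h f_2\Bigr].
\]
So it suffices to prove the two chain-map identities
\[
\dM_\phi\circ(\phi\circ-)=\phi\circ\dM_\g\colon C_{\rm preLie}^{k+1}(\g,\g)\to C_{\rm preLie}^{k+2}(\g,\h),
\qquad
\dM_\phi\circ(\phi^*\circ-)=\phi^*\circ\dM_\h\colon C_{\rm preLie}^{k+1}(\h,\h)\to C_{\rm preLie}^{k+2}(\g,\h).
\]
The first says $\phi$ is a morphism of cochain complexes from $(C_{\rm preLie}^{\bullet}(\g,\g),\dM_\g)$ to $(C_{\rm preLie}^{\bullet}(\g,\h),\dM_\phi)$, covering the identity on $\g$; the second says precomposition by $\phi$ is a morphism from $(C_{\rm preLie}^{\bullet}(\h,\h),\dM_\h)$ to $(C_{\rm preLie}^{\bullet}(\g,\h),\dM_\phi)$.

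To establish these, I would expand both sides using the explicit formula \eqref{eq:pre-Lie cohomology}, term by term. For the first identity, the key inputs are: $\phi$ intertwines the actions, i.e. $\phi(x\cdot_\g v)=\phi(x)\cdot_\h\phi(v)=\rho_\phi(x)\phi(v)$ and $\phi(v\cdot_\g x)=\mu_\phi(x)\phi(v)$, so $\phi\circ L_x=\rho_\phi(x)\circ\phi$ and $\phi\circ R_x=\mu_\phi(x)\circ\phi$; and $\phi$ is a Lie homomorphism $\g^c\to\h^c$, handling the bracket term (which stays inside $\g$ in both cases since the arguments of $f_1$ are in $\g$). For the second identity, one uses instead $\rho_\phi(x)=L_{\phi(x)}^{\h}$ and $\mu_\phi(x)=R_{\phi(x)}^{\h}$ together with $\phi([x,y]_\g)=[\phi(x),\phi(y)]_\h$ and $\phi(x\cdot_\g y)=\phi(x)\cdot_\h\phi(y)$, so that each term of $\dM_\phi(\phi^*\circ f_2)(x_1,\dots,x_{k+2})$ matches the corresponding term of $(\dM_\h f_2)(\phi(x_1),\dots,\phi(x_{k+2}))$. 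Both verifications are routine once the dictionary above is in place; the signs and hatted-argument bookkeeping are identical on the two sides because the combinatorial shape of \eqref{eq:pre-Lie cohomology} is the same.

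The main obstacle is purely organizational rather than conceptual: keeping the three distinct coboundary operators $\dM_\g,\dM_\h,\dM_\phi$, the two "composition" operations $\phi\circ-$ and $\phi^*\circ-$, and the sign $(-1)^k$ all consistent, and making sure the degree shift in the definition $\huaC_{\rm preLie}^k(\phi,\phi)=C_{\rm preLie}^{k+1}(\g,\g)\oplus C_{\rm preLie}^{k+1}(\h,\h)\oplus C_{\rm preLie}^{k}(\g,\h)$ lines up so that $\phi\circ f_1$ and $\phi^*\circ f_2$ land in the correct cochain groups and the sign in $\delta$ produces the cancellation above. I expect no genuine difficulty beyond this; the statement is the standard mapping-cone construction, and once $\dM_\phi^2=0$ and the two chain-map identities are in hand, $\delta_{\rm preLie}^2=0$ follows formally.
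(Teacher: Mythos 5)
Your proposal is correct and follows essentially the same route as the paper: reduce $\delta_{\rm preLie}^2=0$ to the two intertwining identities $\dM_\phi(\phi\circ f_1)=\phi\circ\dM_\g f_1$ and $\dM_\phi(\phi^*\circ f_2)=\phi^*\circ\dM_\h f_2$, prove these by expanding the coboundary formula term by term using that $\phi$ is a pre-Lie (hence Lie) homomorphism, and then conclude formally from $\dM_\g^2=\dM_\h^2=\dM_\phi^2=0$ together with the sign $(-1)^k$ in the mapping-cone differential. The paper simply carries out the term-by-term expansions you sketch, with the same sign bookkeeping.
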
	
\begin{proof}
For $f_1\in C_{\rm preLie}^{k+1}(\g,\g)$, we have
\begin{eqnarray*}
\label{eq:pre-Lie cohomology}
\dM_\phi(\phi\circ f_1) (x_1, \cdots,x_{k+2})
\nonumber&=&\sum_{i=1}^{k+1}(-1)^{i+1}\phi(x_i)\cdot_\h \phi(f_1(x_1, \cdots,\hat{x_i},\cdots,x_{k+2}))\\
\nonumber &&+\sum_{i=1}^{k+1}(-1)^{i+1}\phi(f_1(x_1, \cdots,\hat{x_i},\cdots,x_{k+1},x_i))\cdot_\h\phi(x_{k+2})\\
\nonumber&&-\sum_{i=1}^{k+1}(-1)^{i+1}\phi(f_1(x_1, \cdots,\hat{x_i},\cdots,x_{k+1},x_i\cdot_\g x_{k+2}))\\
\label{eq:cobold} &&+\sum_{1\leq i<j\leq {k+1}}(-1)^{i+j}\phi(f_1([x_i,x_j]_\g,x_1,\cdots,\hat{x_i},\cdots,\hat{x_j},\cdots,x_{k+2}))\\
\nonumber&=&\phi(\sum_{i=1}^{k+1}(-1)^{i+1} x_i\cdot_\g f_1(x_1, \cdots,\hat{x_i},\cdots,x_{k+2}))\\
\nonumber &&+\phi(\sum_{i=1}^{k+1}(-1)^{i+1}f_1(x_1, \cdots,\hat{x_i},\cdots,x_{k+1},x_i))\cdot_\g x_{k+2})\\
\nonumber&&-\phi(\sum_{i=1}^{k+1}(-1)^{i+1}f_1(x_1, \cdots,\hat{x_i},\cdots,x_{k+1},x_i\cdot_\g x_{k+2}))\\
\label{eq:cobold} &&+\phi(\sum_{1\leq i<j\leq {k+1}}(-1)^{i+j}f_1([x_i,x_j]_\g,x_1,\cdots,\hat{x_i},\cdots,\hat{x_j},\cdots,x_{k+2}))\\
\nonumber&=&\phi(\dM_\g(f_1))(x_1, \cdots,x_{k+2}),
\end{eqnarray*}
which implies that
\begin{eqnarray}\label{eq:pre1}
\dM_\phi(\phi(f_1))=\phi(\dM_\g(f_1)).
\end{eqnarray}
For $f_2\in C_{\rm preLie}^{k+1}(\h,\h)$, we have
\begin{eqnarray*}
\label{eq:pre-Lie cohomology}
\dM_\phi(\phi^* f_2) (x_1, \cdots,x_{k+2})
\nonumber&=&\sum_{i=1}^{k+1}(-1)^{i+1}\phi(x_i)\cdot_\h f_2(\phi(x_1, \cdots,\hat{x_i},\cdots,x_{k+2}))\\
\nonumber &&+\sum_{i=1}^{k+1}(-1)^{i+1}f_2(\phi(x_1, \cdots,\hat{x_i},\cdots,x_{k+1},x_i))\cdot_\h\phi(x_{k+2})\\
\nonumber&&-\sum_{i=1}^{k+1}(-1)^{i+1}f_2(\phi(x_1, \cdots,\hat{x_i},\cdots,x_{k+1},x_i\cdot_\g x_{k+2}))\\
\label{eq:cobold} &&+\sum_{1\leq i<j\leq {k+1}}(-1)^{i+j}f_2(\phi([x_i,x_j]_\g,x_1,\cdots,\hat{x_i},\cdots,\hat{x_j},\cdots,x_{k+2}))\\
\nonumber&=&\sum_{i=1}^{k+1}(-1)^{i+1}\phi(x_i)\cdot_\h f_2(\phi(x_1, \cdots,\hat{x_i},\cdots,x_{k+2}))\\
\nonumber &&+\sum_{i=1}^{k+1}(-1)^{i+1}f_2(\phi(x_1, \cdots,\hat{x_i},\cdots,x_{k+1},x_i))\cdot_\h\phi(x_{k+2})\\
\nonumber&&-\sum_{i=1}^{k+1}(-1)^{i+1}f_2(\phi(x_1), \cdots,\hat{x_i},\cdots,\phi(x_{k+1}),\phi(x_i)\cdot_\h \phi(x_{k+2}))\\
\label{eq:cobold} &&+\sum_{1\leq i<j\leq {k+1}}(-1)^{i+j}f_2([\phi(x_i),\phi(x_j)]_\h,\phi(x_1),\cdots,\hat{x_i},\cdots,\hat{x_j},\cdots,\phi(x_{k+2}))\\
\nonumber&=&\phi^*\dM_\h(f_2)(x_1, \cdots,x_{k+2}),
\end{eqnarray*}
which implies that
\begin{eqnarray}\label{eq:pre2}
\dM_\phi(\phi^* f_2)=\phi^*\dM_\h(f_2).
\end{eqnarray}

Furthermore, by \eqref{eq:pre1}, \eqref{eq:pre2} and the fact that $\dM_\g\circ \dM_\g=0$ , $\dM_\h\circ \dM_\h=0$ and $\dM_\phi\circ \dM_\phi=0$, we have
\begin{eqnarray*}
&&\delta_{\rm preLie}\circ\delta_{\rm preLie}(f_1,f_2,f_3)\\
&=&\delta_{\rm preLie}(\dM_\g f_1,\dM_\h f_2,\dM_\phi f_3+(-1)^k(\phi\circ f_1-\phi^*\circ f_2))\\
&=&(\dM_\g\dM_\g f_1,\dM_\h\dM_\h f_2,\dM_\phi(\dM_\phi f_3+(-1)^k(\phi\circ f_1-\phi^*\circ f_2))+(-1)^{k+1}(\phi\circ\dM_\g f_1-\phi^*\circ\dM_\h f_2))\\
&=&\Big(0,0,(-1)^k\big(\dM_\phi(\phi(f_1))-\phi(\dM_\g(f_1)+\phi^*\dM_\h(f_2)-\dM_\phi(\phi^* f_2)\big)\Big)=(0,0,0),
\end{eqnarray*}
which implies that  $\delta_{\rm preLie}\circ\delta_{\rm preLie}=0$.
	\end{proof}	
	
\begin{defi}
The cohomology of the cochain complex $(\huaC^*(\phi,\phi),\delta_{\rm preLie})$ is called the {\bf cohomology of the pre-Lie-morphism triple $(\g,\h,\phi)$}. The corresponding $k$-th cohomology group, which we denote by $H^k(\phi,\phi)$, is called the {\bf $k$-th cohomology group for the pre-Lie-morphism triple $(\g,\h,\phi)$}.	
\end{defi}

\subsection{Infinitesimal deformations of pre-Lie-morphism triples}
Let $\phi$ be a  pre-Lie algebra homomorphism between pre-Lie algebras $(\g,\cdot_\g)$ and $(\h,\cdot_\h)$, $\omega\in \Hom(\otimes^2\g,\g)$, $\varpi\in\Hom(\otimes^2\h,\h)$ and $\theta\in\Hom(\g,\h)$. Consider a  $t$-parameterized family
of multiplications $\cdot_t:\g\otimes \g\longrightarrow \g$, $\ast_t:\g\otimes \g\longrightarrow \g$ and  linear maps $\phi_t:\g\longrightarrow \h$ given by
\begin{eqnarray*}
x\cdot_t y&=&x\cdot_\g y+t\omega(x,y),\\
u\ast_t v&=&u\cdot_\h v+t\varpi(u,v),\\
	\phi_t(x)&=&\phi(x)+t\theta(x),
\end{eqnarray*}
where $x,y\in\g$ and $u,v\in\h$. If  $\g_t=(\g,\cdot_t)$ and $\h_t=(\h,\ast_t)$ are pre-Lie algebras  and
$$\phi_t:\g_t\to\h_t ~\mbox{module}~ t^3$$
 are  pre-Lie algebra  homomorphisms for all $t$, we say that $(\omega,\varphi,\theta)$ generates a {\bf $1$-parameter infinitesimal
	deformation} of $(\g,\h,\phi)$. We denote a $1$-parameter infinitesimal
	deformation of $(\g,\h,\phi)$ by $(\g_t,\h_t,\phi_t)$.

It is straightforward to check that $(\omega,\varpi,\theta)$ generates a $1$-parameter infinitesimal
	deformation of $(\g,\h,\phi)$  if and only if
 \begin{eqnarray}\label{2-cocycle g}
0&=&x\cdot_\g\omega(y,z)-y\cdot_\g\omega(x,z)+\omega(y,x)\cdot_\g z-\omega(x,y)\cdot_\g z  \\
\nonumber&&-\omega(y,x\cdot_\g z)+\omega(x,y\cdot_\g z)+\omega([x,y]_\g,z),\\
\label{omega bracket}0&=&\omega(\omega(x,y),z)-\omega(x,\omega(y,z))-\omega(\omega(y,x),z)+\omega(y,\omega(x,z)),\\
\label{eq:2-cocycle h}0&=&u\cdot_\h\varpi(v,w)-v\cdot_\h\varpi(u,w)+\varpi(v,u)\cdot_\h w-\varpi(u,v)\cdot_\h w  \\
\nonumber&&-\varpi(v,u\cdot_\h w)+\varpi(u,v\cdot_\h w)+\varpi([u,v]_\h,w),\\
\label{varpi bracket}0&=&\varpi(\varpi(u,v),w)-\varpi(u,\varpi(v,w))-\varpi(\varpi(v,u),w)+\varpi(v,\varpi(u,w)),\\
\label{1-cocycle phi}0&=&\phi(x)\cdot_\h\theta(y)+\theta(x)\cdot_\h\phi(y)-\theta(x\cdot_\g y)-\phi(\omega(x,y))+\varpi(\phi(x),\phi(y)),\\
\label{eq:theta2}0&=&\theta(\omega(x,y))-\varpi(\phi(x),\theta(y))-\varpi(\theta(x),\phi(y))-\theta(x)\cdot_\h \theta(y).
%\label{eq:theta3}0&=&\varpi(\theta(x),\theta(y)),\quad\forall~x,y,z\in\g,u,v,w\in\h.
\end{eqnarray}
Note that  $(\ref{2-cocycle g})$, \eqref{eq:2-cocycle h} and  \eqref{1-cocycle phi} mean that $\delta_{\rm preLie}(\omega,\varpi,\theta)=0$, $(\ref{omega bracket})$ means that $(\g,\omega)$ is a pre-Lie algebra, and $(\ref{varpi bracket})$ means that $(\h,\varpi)$ is a pre-Lie algebra.

\begin{defi}
Two infinitesimal deformations $(\g_t,\h_t,\phi_t)$ and 	$(\g'_t,\h'_t,\phi'_t)$  of a pre-Lie-morphism triple $(\g,\h,\phi)$,  which are generated by $(\omega,\varpi,\theta)$ and $(\omega',\varpi',\theta')$, are said to be {\bf equivalent} if there exist a family of pre-Lie algebra isomorphisms $\Id_\g+tN:\g_t\to\g'_t$ and $\Id_\h+tS:\h_t\to\h'_t$ with $N\in\gl(\g)$ and $S\in\gl(\h)$, such that
$$\phi'_t\circ (\Id_\g+tN)=(\Id_\h+tS)\circ \phi_t.$$
A infinitesimal deformation $(\g_t,\h_t,\phi_t)$ of a pre-Lie-morphism triple $(\g,\h,\phi)$ is said to be {\bf trivial} if it is equivalent to $((\g,\cdot_\g),(\h,\cdot_\h),\phi)$.
\end{defi}

By a direct calculation, $(\g_t,\h_t,\phi_t)$ and 	$(\g'_t,\h'_t,\phi'_t)$  of a pre-Lie-morphism triple $(\g,\h,\phi)$ are equivalent if and only if
\begin{eqnarray}
\omega(x,y)-\omega'(x,y)&=&x\cdot_\g N(y)+N(x)\cdot_\g y-N(x\cdot_\g y),\label{2-exact}\\
N\omega(x,y)&=&\omega'(x,N(y))+\omega'(N(x),y)+N(x)\cdot_\g N(y),\label{integral condition 1}\\
\omega'(N(x),N(y))&=&0,\label{eq:con1}\\
\varpi(u,v)-\varpi'(u,v)&=&u\cdot_\h S(v)+S(u)\cdot_\h v-S(u\cdot_\h v),\label{2-exact h}\\
S\varpi(u,v)&=&\varpi'(u,S(v))+\varpi'(S(u),v)+S(u)\cdot_\h S(v),\label{integral condition 1 h}\\
\varpi'(S(u,S(v))&=&0,\label{eq:con1 h}\\
\label{eq:relation7}\theta-\theta'&=&\phi\circ N-S\circ \phi,\\
\label{eq:relation8}\theta'\circ N&=&S\circ \theta.
\end{eqnarray}
Note that  $(\ref{2-exact})$, \eqref{2-exact h} and  \eqref{eq:relation7} mean that $(\omega,\varpi,\theta)-(\omega',\varpi',\theta')=\delta_{\rm preLie}(N,S,0)$.

We summarize the above discussion into
the following theorem:
\begin{thm}\label{thm:deformation}
Let $(\g_t,\h_t,\phi_t)$  be a infinitesimal deformation  of a pre-Lie-morphism triple $(\g,\h,\phi)$ generated by $(\omega,\varpi,\theta)$.
Then $(\omega,\varpi,\theta)$ is closed, i.e. $\delta_{\rm preLie}(\omega,\varpi,\theta)=0$.  Furthermore, if two infinitesimal deformations
$(\g_t,\h_t,\phi_t)$ and 	$(\g'_t,\h'_t,\phi'_t)$  of the pre-Lie-morphism triple $(\g,\h,\phi)$ generated by $(\omega,\varpi,\theta)$ and $(\omega',\varpi',\theta')$ are equivalent,  then $(\omega,\varpi,\theta)$ and $(\omega',\varpi',\theta')$  are in the same cohomology class in $H^1(\phi,\phi)$.
\end{thm}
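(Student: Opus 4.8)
The plan is to verify the two assertions of Theorem \ref{thm:deformation} directly from the defining equations of an infinitesimal deformation and from the equivalence conditions, both of which have already been spelled out in the excerpt. For the first assertion, I would take the six equations \eqref{2-cocycle g}--\eqref{eq:theta2} that characterize when $(\omega,\varpi,\theta)$ generates a $1$-parameter infinitesimal deformation, and observe that exactly three of them, namely \eqref{2-cocycle g}, \eqref{eq:2-cocycle h} and \eqref{1-cocycle phi}, are the linear ($O(t)$) constraints. Reading off the definition of $\delta_{\rm preLie}$, equation \eqref{2-cocycle g} says $\dM_\g\omega=0$, equation \eqref{eq:2-cocycle h} says $\dM_\h\varpi=0$, and equation \eqref{1-cocycle phi}, after matching signs with the formula for $\dM_\phi$ on $C_{\rm preLie}^1(\g,\h)$ and with the term $(-1)^k(\phi\circ\omega-\phi^*\circ\varpi)$ for $k=1$, says $\dM_\phi\theta + (\phi\circ\omega-\phi^*\circ\varpi)\cdot(-1)=0$ up to the convention; hence $\delta_{\rm preLie}(\omega,\varpi,\theta)=(\dM_\g\omega,\dM_\h\varpi,\dM_\phi\theta-(\phi\circ\omega-\phi^*\circ\varpi))=(0,0,0)$. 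This is essentially a bookkeeping check that the three labelled equations coincide term-by-term with the three components of $\delta_{\rm preLie}$, and the excerpt already flags this (``$(\ref{2-cocycle g})$, \eqref{eq:2-cocycle h} and \eqref{1-cocycle phi} mean that $\delta_{\rm preLie}(\omega,\varpi,\theta)=0$'').

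For the second assertion, I would start from the equivalence data: pre-Lie isomorphisms $\Id_\g+tN\colon\g_t\to\g'_t$, $\Id_\h+tS\colon\h_t\to\h'_t$ with $\phi'_t\circ(\Id_\g+tN)=(\Id_\h+tS)\circ\phi_t$, and expand each of these three conditions in powers of $t$. The $O(t)$ parts give precisely equations \eqref{2-exact}, \eqref{2-exact h} and \eqref{eq:relation7} (the higher-order parts give \eqref{integral condition 1}--\eqref{eq:con1}, \eqref{integral condition 1 h}--\eqref{eq:con1 h}, \eqref{eq:relation8}, which are not needed here). Then I would recognize $x\cdot_\g N(y)+N(x)\cdot_\g y-N(x\cdot_\g y)$ as $(\dM_\g N)(x,y)$, $u\cdot_\h S(v)+S(u)\cdot_\h v-S(u\cdot_\h v)$ as $(\dM_\h S)(u,v)$, and $\phi\circ N-S\circ\phi$ as the mixed term $(-1)^0(\phi\circ N-\phi^*\circ S)$ appearing in $\delta_{\rm preLie}(N,S,0)$ for $k=0$ (noting $\dM_\phi 0 = 0$ since $C^0_{\rm preLie}(\g,\h)=0$). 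Putting these three identities together gives $(\omega,\varpi,\theta)-(\omega',\varpi',\theta')=\delta_{\rm preLie}(N,S,0)$, so the two cocycles differ by a coboundary and represent the same class in $H^1(\phi,\phi)$.

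The only genuinely delicate point is sign and convention matching: one must check that the $(-1)^k$ factors in the definition of $\delta_{\rm preLie}$, the ordering conventions in the pre-Lie coboundary \eqref{eq:pre-Lie cohomology} (where the ``last'' slot plays a distinguished role), and the identification of $C^1_{\rm preLie}(\g,\h)=\Hom(\g,\h)$ with $1$-cochains, all line up with the way \eqref{1-cocycle phi} and \eqref{eq:relation7} are written. Concretely, for a $0$-cochain pair $(N,S,0)$ one has $\delta_{\rm preLie}(N,S,0)=(\dM_\g N,\dM_\h S,\dM_\phi 0+(\phi\circ N-\phi^*\circ S))$, and I would verify $(\dM_\phi\theta)$-type terms and the $\phi\circ N-\phi^*\circ S$ term carry the signs displayed in \eqref{2-exact}, \eqref{2-exact h}, \eqref{eq:relation7}. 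Once the conventions are pinned down, both parts of the theorem follow immediately; I expect no conceptual obstacle beyond this careful accounting, since the hard analytic content (that $\delta_{\rm preLie}^2=0$, and that $\dM_\g,\dM_\h,\dM_\phi$ are genuine coboundary operators) is already established.
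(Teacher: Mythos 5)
Your proposal is correct and follows essentially the same route as the paper: the paper explicitly notes that \eqref{2-cocycle g}, \eqref{eq:2-cocycle h}, \eqref{1-cocycle phi} amount to $\delta_{\rm preLie}(\omega,\varpi,\theta)=0$ and that \eqref{2-exact}, \eqref{2-exact h}, \eqref{eq:relation7} amount to $(\omega,\varpi,\theta)-(\omega',\varpi',\theta')=\delta_{\rm preLie}(N,S,0)$, and then states the theorem as a summary of this discussion. Your sign bookkeeping is also consistent with the paper's conventions, since $(\omega,\varpi,\theta)\in\huaC^1_{\rm preLie}(\phi,\phi)$ (so $k=1$ gives the minus sign matching \eqref{1-cocycle phi}) and $(N,S,0)\in\huaC^0_{\rm preLie}(\phi,\phi)$ (so $k=0$ gives $\phi\circ N-S\circ\phi$ matching \eqref{eq:relation7}).
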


Now we consider trivial deformations of a pre-Lie-morphism triple  $(\g,\h,\phi)$. Then \eqref{2-exact}-\eqref{eq:relation8} reduce to
\begin{eqnarray}
	\omega(x,y)&=&x\cdot_\g N(y)+N(x)\cdot_\g y-N(x\cdot_\g y),\label{eq:trivial1}\\
N\omega(x,y)&=&N(x)\cdot_\g N(y),\label{eq:trivial2}\\
\varpi(u,v)&=&u\cdot_\h S(v)+S(u)\cdot_\h v-S(u\cdot_\h v),\label{eq:trivial3}\\
S\varpi(u,v)&=&S(u)\cdot_\h S(v),\label{eq:trivial4}\\
\theta&=&\phi\circ N-S\circ \phi,\label{eq:trivial5}\\
S\circ \theta&=&0.\label{eq:trivial6}
\end{eqnarray}
It follows from \eqref{eq:trivial1} and \eqref{eq:trivial2}  that $N$ must satisfy the following condition:
\begin{eqnarray}
N(x)\cdot_\g N(y)-N(x\cdot_\g N(y)+N(x)\cdot_\g y-N(x\cdot_\g
y))=0,\label{eq:Nijenhuis1}
\end{eqnarray}
which means that $N$ is a Nijenhuis operator on the pre-Lie algebra $(\g,\cdot_\g)$.

It follows from \eqref{eq:trivial3} and \eqref{eq:trivial4}  that $S$ must satisfy the following condition:
\begin{eqnarray}
S(u)\cdot_\h S(v)-S(u\cdot_\h S(v)+S(u)\cdot_\h v-S(u\cdot_\h
v))=0,\label{eq:Nijenhuis2}
\end{eqnarray}
which means that $S$ is a Nijenhuis operator on the pre-Lie algebra $(\h,\cdot_\h)$.

It follows from \eqref{eq:trivial5} and \eqref{eq:trivial6}  that $N$ and $S$ must satisfy the following condition:
\begin{eqnarray}
S\circ\phi\circ N=S^2\circ \phi.\label{eq:Nijenhuis3}
\end{eqnarray}
\begin{defi}
Let $((\g,\cdot_\g),(\h,\cdot_\h),\phi)$ be a pre-Lie-morphism triple. A pair $(N,S)$, where $N\in\gl(\g)$ and $S\in\gl(\h)$, is called a {\bf Nijenhuis pair} on the pre-Lie-morphism triple $(\g,\h,\phi)$ if $N$ is a Nijenhuis operator on the pre-Lie algebra $(\g,\cdot_\g)$,  $S$ is a Nijenhuis operator on the pre-Lie algebra $(\h,\cdot_\h)$ and  \eqref{eq:Nijenhuis3} holds.
\end{defi}

We have seen that a trivial deformation of a pre-Lie-morphism triple $(\g,\h,\phi)$ could give rise to a Nijenhuis pair. In fact, the converse is also true.

\begin{thm}\label{thm:trivial def}
  Let $(N,S)$ be a Nijenhuis pair on the 	pre-Lie-morphism triple $(\g,\h,\phi)$. Then a deformation of  the 	pre-Lie-morphism triple $\phi$ can be
  obtained by putting
\begin{eqnarray}
\label{eq:exact 1}\omega(x,y)&=& \dM_{\g}N(x,y),\\
\label{eq:exact 2}\varpi(u,v)&=& \dM_{\h}S(u,v),\\
\label{eq:exact 3}\theta(x)&=&\phi(N(x))-S(\phi(x)),\quad\forall~x,y\in\g,u,v\in\h.
\end{eqnarray}

  Furthermore, this deformation is trivial.
\end{thm}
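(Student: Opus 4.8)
The plan is to verify that the triple $(\omega,\varpi,\theta)$ defined by \eqref{eq:exact 1}--\eqref{eq:exact 3} satisfies the six equations \eqref{2-cocycle g}--\eqref{eq:theta2} characterizing a $1$-parameter infinitesimal deformation, and then to exhibit the explicit equivalence with the trivial deformation using the Nijenhuis pair itself as the conjugating data. First I would observe that $\omega = \dM_\g N$ and $\varpi = \dM_\h S$ are coboundaries for the pre-Lie cochain complexes of $(\g,\cdot_\g)$ and $(\h,\cdot_\h)$ with regular coefficients; hence \eqref{2-cocycle g} and \eqref{eq:2-cocycle h} hold automatically since $\dM_\g \dM_\g = 0$ and $\dM_\h \dM_\h = 0$. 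For \eqref{omega bracket}, I would spell out $\dM_\g N(x,y) = x\cdot_\g N(y) + N(x)\cdot_\g y - N(x\cdot_\g y)$, i.e.\ $\omega = \cdot_N$ in the notation of the Nijenhuis-operator example in Section~\ref{sec:Pre}; the fact that $(\g,\cdot_N)$ is a pre-Lie algebra (that example) is exactly \eqref{omega bracket}, and similarly $(\h,\cdot_S)$ being pre-Lie gives \eqref{varpi bracket}.

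Next I would check \eqref{1-cocycle phi}, which by the preceding observations reads $\delta_{\rm preLie}(\omega,\varpi,\theta)=0$ in degree one; since $\omega = \dM_\g N$ and $\varpi = \dM_\h S$, and $\theta = \phi\circ N - S\circ\phi$, the identity $\delta_{\rm preLie}(N,S,0) = (\dM_\g N, \dM_\h S, \dM_\phi 0 + (\phi\circ N - \phi^*\circ S))$ together with the degree-one instance $\dM_\phi 0 = 0$ and the observation that $(\phi^*\circ S)(x) = S(\phi(x)) = (S\circ\phi)(x)$ shows $(\omega,\varpi,\theta) = \delta_{\rm preLie}(N,S,0)$, so it is closed. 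The two remaining equations \eqref{eq:theta2} and \eqref{omega bracket}-type closure for $\theta$ are where the Nijenhuis conditions genuinely enter: \eqref{eq:theta2} asks $\theta(\omega(x,y)) = \varpi(\phi(x),\theta(y)) + \varpi(\theta(x),\phi(y)) + \theta(x)\cdot_\h\theta(y)$. I would expand both sides using $\omega = \dM_\g N$, $\varpi = \dM_\h S$, $\theta = \phi N - S\phi$, the homomorphism property $\phi(x\cdot_\g y) = \phi(x)\cdot_\h\phi(y)$, the Nijenhuis identity \eqref{eq:Nijenhuis1} for $N$ on $\g$, the Nijenhuis identity \eqref{eq:Nijenhuis2} for $S$ on $\h$, and crucially the compatibility \eqref{eq:Nijenhuis3}, $S\circ\phi\circ N = S^2\circ\phi$. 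This bookkeeping — tracking terms of the form $\phi(N(x)\cdot_\g N(y))$, $S(\phi(x)\cdot_\h S\phi(y))$, etc.\ and cancelling them against the Nijenhuis relations — is the main obstacle; I expect the computation to close only because $\phi$ intertwines $N$ and $S$ in the weak sense encoded by \eqref{eq:Nijenhuis3}.

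Finally, for triviality I would produce the equivalence explicitly: take $N$ and $S$ as the conjugating maps and verify \eqref{2-exact}--\eqref{eq:relation8} with $\omega' = \varpi' = 0$, $\theta' = 0$, $N$ and $S$ in the roles above. Equations \eqref{2-exact}, \eqref{2-exact h}, \eqref{eq:relation7} then become precisely \eqref{eq:exact 1}, \eqref{eq:exact 2}, \eqref{eq:exact 3}; equation \eqref{integral condition 1} becomes $N\omega(x,y) = N(x)\cdot_\g N(y)$, which follows by applying $N$ to $\omega = \dM_\g N$ and using the Nijenhuis identity \eqref{eq:Nijenhuis1} for $N$; equation \eqref{integral condition 1 h} is the analogous statement for $S$; equations \eqref{eq:con1}, \eqref{eq:con1 h} hold trivially since $\omega' = \varpi' = 0$; and \eqref{eq:relation8} becomes $0 = S\circ\theta$, which is exactly \eqref{eq:Nijenhuis3} rewritten via $\theta = \phi N - S\phi$, since $S\theta = S\phi N - S^2\phi = 0$. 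Assembling these verifications gives both that $(\omega,\varpi,\theta)$ generates a deformation and that it is equivalent to $((\g,\cdot_\g),(\h,\cdot_\h),\phi)$, completing the proof.
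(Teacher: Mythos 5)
Your proposal is correct and follows essentially the same route as the paper: closedness from $(\omega,\varpi,\theta)=\delta_{\rm preLie}(N,S,0)$, the pre-Lie conditions \eqref{omega bracket} and \eqref{varpi bracket} from $N$ and $S$ being Nijenhuis operators, and triviality via the equivalence maps $\Id_\g+tN$, $\Id_\h+tS$ (the paper checks the homomorphism identities directly, you check the equivalent system \eqref{2-exact}--\eqref{eq:relation8} with primes zero, which is the same content). The one step you leave as expected bookkeeping, \eqref{eq:theta2}, does close exactly as you predict: using $S\circ\phi\circ N=S^{2}\circ\phi$, the $N$-Nijenhuis identity and the homomorphism property of $\phi$, the expression collapses to $S(\phi(x))\cdot_\h S(\phi(y))-S\bigl(S(\phi(x))\cdot_\h\phi(y)+\phi(x)\cdot_\h S(\phi(y))-S(\phi(x)\cdot_\h\phi(y))\bigr)$, which vanishes because $S$ is a Nijenhuis operator on $\h$.
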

\begin{proof}
By \eqref{eq:exact 1}-\eqref{eq:exact 3}, we have $(\omega,\varpi,\theta)=\delta_{\rm preLie}(N,S,0)$. Then $(\omega,\varpi,\theta)$ is closed naturally	, which implies that $(\ref{2-cocycle g})$, \eqref{eq:2-cocycle h} and  \eqref{1-cocycle phi} hold. $(\ref{omega bracket})$  and $(\ref{varpi bracket})$  follow from that $N$ and $S$ are Nijenhuis operators on pre-Lie algebras $\g$ and $\h$, respectively.  We will show that \eqref{eq:theta2} holds. By the fact that $\phi$ is a pre-Lie homomorphism and \eqref{eq:Nijenhuis3} , we have
\begin{eqnarray*}
&&\theta(\omega(x,y))-\varpi(\phi(x),\theta(y))-\varpi(\theta(x),\phi(y))-\theta(x)\cdot_\h \theta(y)\\
&=&S(\phi(x))\cdot_\h S(\phi(y))-S(S(\phi(x))\cdot_\h\phi(y)+\phi(x)\cdot_\h S(\phi(y))-S(\phi(x)\cdot_\h \phi(y)))	\\
&=&0.
\end{eqnarray*}
The last equality follows from $S$ is a Nijenhuis operator on the pre-Lie algebra $\g$.

At last, it is obvious that
\begin{eqnarray*}
	(\Id_\g +tN)(x\cdot_t y)&=&(\Id_\g +tN)(x)\cdot_\g (\Id_\g +tN)(y),\\
		(\Id_\h +tS)(u\ast_t v)&=&(\Id_\h +tS)(u)\cdot_\h (\Id_\h+tS)(v),\\
		\phi((\Id_\g+tN)(x))&=&(\Id_\h+tS)((\phi+t\theta)(x)),\quad \forall~x,y\in\g,u,v\in\h.
\end{eqnarray*}
Thus the deformation is trivial.
\end{proof}

\subsection{Nijenhuis pairs on pre-Lie-morphism triples and compatible structures on pre-Lie algebras}
Let $(\g,\cdot_\g)$ be a pre-Lie algebra and $(V;\rho,\mu)$ a representation. Let $T_1,T_2: V\longrightarrow \g$ be two
$\huaO$-operators on the pre-Lie algebra $\g$ associated to the representation $(V;\rho,\mu)$. Recall that $T_1$ and $T_2$ are {\bf compatible} if for any
$k_1,k_2$, $k_1T_1+k_2T_2$ is still an $\huaO$-operator.

A pair of compatible $\huaO$-operators can  give rise to a Nijenhuis operator under some conditions.
\begin{pro}{\rm(\cite{WBLS})}\label{thm:comRBN}
Let $T_1,T_2: V\longrightarrow \g$ be two $\huaO$-operators on
a pre-Lie algebra $\g$ associated to a representation $(V;\rho,\mu)$. Suppose that $T_2$ is invertible. If $T_1$ and $T_2$
are compatible, then $N=T_1\circ T_2^{-1}$ is a Nijenhuis operator on the pre-Lie algebra $(\g,\cdot_\g)$.
\end{pro}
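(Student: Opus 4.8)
The plan is to verify the Nijenhuis identity~\eqref{eq:Nijenhuis operator} for $N = T_1\circ T_2^{-1}$ by a direct computation, using the invertibility of $T_2$ to parametrize elements of $\g$ by elements of $V$. Since $T_2$ is bijective, any $x,y\in\g$ can be written uniquely as $x = T_2(u)$ and $y = T_2(v)$ with $u,v\in V$, and then by definition $N(x) = T_1(u)$, $N(y) = T_1(v)$. So it suffices to prove~\eqref{eq:Nijenhuis operator} after this substitution.

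The essential algebraic ingredient is a ``mixed'' $\huaO$-operator relation extracted from compatibility. Writing out the $\huaO$-operator equation for $T_1 + T_2$ (the instance $k_1 = k_2 = 1$ of compatibility) and subtracting the $\huaO$-operator equations for $T_1$ and $T_2$ separately, the two quadratic terms cancel and one is left with
\begin{equation*}
T_1(u)\cdot_\g T_2(v) + T_2(u)\cdot_\g T_1(v) = T_1\big(\rho(T_2(u))v + \mu(T_2(v))u\big) + T_2\big(\rho(T_1(u))v + \mu(T_1(v))u\big),\qquad \forall~u,v\in V.
\end{equation*}
I would record this as a preliminary step before the main computation.

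Next I would evaluate both sides of~\eqref{eq:Nijenhuis operator}. For the left-hand side, the $\huaO$-operator equation for $T_1$ gives $N(x)\cdot_\g N(y) = T_1(u)\cdot_\g T_1(v) = T_1\big(\rho(T_1(u))v + \mu(T_1(v))u\big)$. For the right-hand side, the $\huaO$-operator equation for $T_2$ gives $x\cdot_\g y = T_2(u)\cdot_\g T_2(v) = T_2\big(\rho(T_2(u))v + \mu(T_2(v))u\big)$, hence $N(x\cdot_\g y) = T_1\big(\rho(T_2(u))v + \mu(T_2(v))u\big)$, and therefore
\begin{align*}
N(x)\cdot_\g y + x\cdot_\g N(y) - N(x\cdot_\g y) &= T_1(u)\cdot_\g T_2(v) + T_2(u)\cdot_\g T_1(v) - T_1\big(\rho(T_2(u))v + \mu(T_2(v))u\big)\\
&= T_2\big(\rho(T_1(u))v + \mu(T_1(v))u\big),
\end{align*}
the last equality being exactly the mixed relation. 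Applying $N = T_1\circ T_2^{-1}$ to this last expression yields $T_1\big(\rho(T_1(u))v + \mu(T_1(v))u\big)$, which coincides with the left-hand side computed above. Hence~\eqref{eq:Nijenhuis operator} holds and $N$ is a Nijenhuis operator on $(\g,\cdot_\g)$.

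The computation is essentially routine; the only point that needs care is the correct derivation of the mixed relation and keeping track of which $\huaO$-operator equation (for $T_1$, for $T_2$, or for $T_1+T_2$) is used to rewrite each term, so that the stated cancellations go through. That is the part I would write out in full detail; everything else is formal bookkeeping.
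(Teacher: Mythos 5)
Your proof is correct. The paper itself gives no argument for this proposition---it is quoted from \cite{WBLS}---and your computation is the standard one found there: the mixed identity
$T_1(u)\cdot_\g T_2(v)+T_2(u)\cdot_\g T_1(v)=T_1\big(\rho(T_2(u))v+\mu(T_2(v))u\big)+T_2\big(\rho(T_1(u))v+\mu(T_1(v))u\big)$
obtained from the $k_1=k_2=1$ instance of compatibility, followed by the substitution $x=T_2(u)$, $y=T_2(v)$ (legitimate since $T_2$ is bijective), and every step of your bookkeeping checks out.
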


Given two compatible $\huaO$-operators $T_1$ and $T_2$ on the pre-Lie algebra $\g$ associated to the representation $(V;\rho,\mu)$, by Example \ref{ex:O-operators}, $((V,\cdot^{T_1}),(\g,\cdot_\g),T_1)$ and $((V,\cdot^{T_2}),(\g,\cdot_\g),T_2)$ are pre-Lie-morphism triples.
\begin{lem}\label{lem:Nijenhuis pair 1}
Let $T_1,T_2: V\longrightarrow \g$ be two
$\huaO$-operators on the pre-Lie algebra $\g$ associated to the representation $(V;\rho,\mu)$. Suppose that $T_2$ is invertible. If
$T_1$ and $T_2$ are compatible, then
\begin{itemize}
  \item[\rm(i)]$N\circ T_1=T_1\circ S$ and $S$ is a Nijenhuis operator on the pre-Lie algebra $(V,\cdot^{T_1})$,  where $N=T_1\circ T_2^{-1}$ and $ S=T^{-1}_2\circ T_1$;
   \item[\rm(ii)]$N\circ T_2=T_2\circ S$ with $S=T^{-1}_2\circ T_1$  and $S$ is a Nijenhuis operator on the pre-Lie algebra $(V,\cdot^{T_2})$, where $N=T_1\circ T_2^{-1}$ and $ S=T^{-1}_2\circ T_1$.
\end{itemize}
\end{lem}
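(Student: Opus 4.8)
The plan is to dispose first of the two intertwining identities, which are purely formal, and then to prove the two Nijenhuis assertions: part (ii) by a conjugation argument through the isomorphism $T_2$, and part (i) by a computation that I push through the \emph{invertible} operator $T_2$ rather than through the (possibly non‑injective) $T_1$. To set up, observe that from $N=T_1\circ T_2^{-1}$ and $S=T_2^{-1}\circ T_1$ one gets immediately $N\circ T_1=T_1\circ(T_2^{-1}\circ T_1)=T_1\circ S$ and $N\circ T_2=T_1=T_2\circ(T_2^{-1}\circ T_1)=T_2\circ S$, so in particular $T_2\circ S=T_1$ and $S=T_2^{-1}\circ N\circ T_2$. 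By Proposition \ref{thm:comRBN}, $N$ is a Nijenhuis operator on $(\g,\cdot_\g)$, and by Example \ref{ex:O-operators} the maps $T_i\colon(V,\cdot^{T_i})\longrightarrow(\g,\cdot_\g)$ are pre-Lie algebra homomorphisms for $i=1,2$.

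For part (ii): since $T_2$ is a bijective homomorphism, it is an isomorphism of pre-Lie algebras $(V,\cdot^{T_2})\longrightarrow(\g,\cdot_\g)$, and its inverse is again a homomorphism. As $S=T_2^{-1}\circ N\circ T_2$ is the conjugate of the Nijenhuis operator $N$ by this isomorphism, it is itself a Nijenhuis operator on $(V,\cdot^{T_2})$: applying $T_2^{-1}$ to the identity \eqref{eq:Nijenhuis operator} for $N$ evaluated at $(T_2 u,T_2 v)$, and using $N\circ T_2=T_2\circ S$ together with the multiplicativity of $T_2^{-1}$, transports it verbatim into \eqref{eq:Nijenhuis operator} for $S$ with respect to $\cdot^{T_2}$.

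For part (i): with $N\circ T_1=T_1\circ S$ already in hand, it remains to verify that
$$E(u,v):=S(u)\cdot^{T_1}S(v)-S\big(S(u)\cdot^{T_1}v+u\cdot^{T_1}S(v)-S(u\cdot^{T_1}v)\big)=0,\qquad u,v\in V.$$
Since $T_2$ is invertible, it suffices to prove $T_2\big(E(u,v)\big)=0$. Applying $T_2$, and using $T_2\circ S=T_1$, the homomorphism property of $T_1$ for $\cdot^{T_1}$, and $T_1\circ S=N\circ T_1$, one computes
$$T_2\Big(S\big(S(u)\cdot^{T_1}v+u\cdot^{T_1}S(v)-S(u\cdot^{T_1}v)\big)\Big)=NT_1(u)\cdot_\g T_1(v)+T_1(u)\cdot_\g NT_1(v)-N\big(T_1(u)\cdot_\g T_1(v)\big),$$
so it is enough to show that $T_2\big(S(u)\cdot^{T_1}S(v)\big)$ equals the same expression. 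For this I would expand the hypothesis that $k_1T_1+k_2T_2$ is an $\huaO$-operator and compare the coefficients of $k_1k_2$, obtaining the compatibility identity
$$T_1(a)\cdot_\g T_2(b)+T_2(a)\cdot_\g T_1(b)=T_1\big(\rho(T_2 a)b+\mu(T_2 b)a\big)+T_2\big(\rho(T_1 a)b+\mu(T_1 b)a\big),\qquad a,b\in V.$$
Putting $a=S(u)$, $b=S(v)$ and simplifying with $T_2\circ S=T_1$ and $T_1\circ S=N\circ T_1$, the left-hand side becomes $NT_1(u)\cdot_\g T_1(v)+T_1(u)\cdot_\g NT_1(v)$; on the right-hand side, $T_2\big(\rho(T_1(Su))(Sv)+\mu(T_1(Sv))(Su)\big)$ is exactly $T_2\big(S(u)\cdot^{T_1}S(v)\big)$ by the definition of $\cdot^{T_1}$, while $\rho(T_1 u)(Sv)+\mu(T_1 v)(Su)=\rho(T_2(Su))(Sv)+\mu(T_2(Sv))(Su)$ is sent by $T_2$ to $T_1(u)\cdot_\g T_1(v)$ by the $\huaO$-operator relation for $T_2$, hence sent by $T_1$ to $T_1 T_2^{-1}\big(T_1(u)\cdot_\g T_1(v)\big)=N\big(T_1(u)\cdot_\g T_1(v)\big)$. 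Rearranging yields $T_2\big(S(u)\cdot^{T_1}S(v)\big)=NT_1(u)\cdot_\g T_1(v)+T_1(u)\cdot_\g NT_1(v)-N\big(T_1(u)\cdot_\g T_1(v)\big)$, so $T_2\big(E(u,v)\big)=0$ and therefore $E(u,v)=0$.

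The step I expect to be the main obstacle is part (i). Because $T_1$ need not be injective, it is not enough to observe that $T_1$ carries $E(u,v)$ to the Nijenhuis defect of $N$ at $(T_1 u,T_1 v)$ (which vanishes); one must instead transport $E(u,v)$ through the invertible operator $T_2$, and the device that makes the resulting computation close is to apply the compatibility identity at the shifted arguments $a=S(u)$, $b=S(v)$. After that substitution, everything reduces to routine bookkeeping with the two $\huaO$-operator relations and the homomorphism property of $T_1$.
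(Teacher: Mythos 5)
Your argument is correct, but it is a genuinely different route from the paper's: the paper disposes of Lemma \ref{lem:Nijenhuis pair 1} by citing Theorem 4.4 and Proposition 4.21 of \cite{LW}, whereas you give a self-contained verification. Your two intertwining identities are immediate from $N=T_1\circ T_2^{-1}$, $S=T_2^{-1}\circ T_1$; your part (ii) is the standard transport of the Nijenhuis identity \eqref{eq:Nijenhuis operator} for $N$ (supplied by Proposition \ref{thm:comRBN}) through the pre-Lie isomorphism $T_2\colon (V,\cdot^{T_2})\to(\g,\cdot_\g)$ of Example \ref{ex:O-operators}; and your part (i) correctly extracts the cross-term ($k_1k_2$-coefficient) identity from compatibility, evaluates it at $(S(u),S(v))$, and simplifies with $T_2\circ S=T_1$, $T_1\circ S=N\circ T_1$ and the $\huaO$-operator relation for $T_2$ to get $T_2\bigl(S(u)\cdot^{T_1}S(v)\bigr)=NT_1(u)\cdot_\g T_1(v)+T_1(u)\cdot_\g NT_1(v)-N\bigl(T_1(u)\cdot_\g T_1(v)\bigr)$, so that $T_2(E(u,v))=0$ and injectivity of $T_2$ finishes. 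Your remark that one must push $E(u,v)$ through the invertible $T_2$ rather than through the possibly non-injective $T_1$ is exactly the right precaution, and notably your proof of (i) never needs $N$ to be Nijenhuis. What each approach buys: the paper's citation is short and leans on the Maurer-Cartan/Nijenhuis machinery developed in \cite{LW} (which also yields stronger statements, e.g. hierarchies of compatible $\huaO$-operators), while your computation keeps the present paper self-contained, uses only the definitions already stated here, and makes transparent exactly which hypotheses (compatibility cross-term, invertibility of $T_2$) enter each half of the lemma.
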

\begin{proof}
It follows from  Theorem 4.4 and Proposition 4.21 in \cite{LW}.
\end{proof}

\begin{pro}\label{pro:Nij-pair-o}{
Let $T_1,T_2: V\longrightarrow \g$ be two
$\huaO$-operators on the pre-Lie algebra $\g$ associated to the representation $(V;\rho,\mu)$. Suppose that $T_2$ is invertible. If
$T_1$ and $T_2$ are compatible, then $(N,S)$ is both a Nijenhuis pair on the pre-Lie-morphism triples $((V,\cdot_{T_1}),(\g,\cdot_\g),T_1)$ and $((V,\cdot_{T_2}),(\g,\cdot_\g),T_2)$, where $N=T_1\circ T_2^{-1}$ and $ S=T^{-1}_2\circ T_1$.

Furthermore, $((V,\cdot^{T_1}_S),(\g,\cdot_N),T_1)$ and $((V,\cdot^{T_2}_S),(\g,\cdot_N),T_2)$ are pre-Lie-morphism triples, where the multiplication $\cdot^{T_i}_S:\otimes^2V\longrightarrow V$ for $i=1,2$ is given by
\begin{eqnarray}
 \label{eq:defieq21}u\cdot^{T_i}_S v&=&S(u)\cdot^{T_i} v+u\cdot^{T_i} S(v)-S(u\cdot^{T_i} v),\quad\forall~u,v\in V.
\end{eqnarray}

}
\end{pro}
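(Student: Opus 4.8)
The plan is to reduce everything to the verifications already provided by Lemma~\ref{lem:Nijenhuis pair 1} together with the definition of a Nijenhuis pair. Recall that a Nijenhuis pair $(N,S)$ on a pre-Lie-morphism triple $((\g_1,\cdot_1),(\g_2,\cdot_2),\psi)$ consists of a Nijenhuis operator $N$ on $\g_1$, a Nijenhuis operator $S$ on $\g_2$ (wait --- I must be careful with the labeling), satisfying the compatibility \eqref{eq:Nijenhuis3}. For the triple $((V,\cdot^{T_i}),(\g,\cdot_\g),T_i)$ the ``source'' algebra is $V$ with multiplication $\cdot^{T_i}$ and the ``target'' is $\g$ with $\cdot_\g$; so the roles are: the operator on the source is $S=T_2^{-1}\circ T_1$ and the operator on the target is $N=T_1\circ T_2^{-1}$. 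First I would note that Lemma~\ref{lem:Nijenhuis pair 1}(i) gives exactly that $S$ is a Nijenhuis operator on $(V,\cdot^{T_1})$ and that $N\circ T_1=T_1\circ S$; and Proposition~\ref{thm:comRBN} gives that $N=T_1\circ T_2^{-1}$ is a Nijenhuis operator on $(\g,\cdot_\g)$. So two of the three defining conditions of a Nijenhuis pair on $((V,\cdot^{T_1}),(\g,\cdot_\g),T_1)$ are immediate.

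The remaining point is the mixed compatibility condition \eqref{eq:Nijenhuis3}, which in this notation reads $S\circ T_1\circ N = S^{2}\circ T_1$ --- no, rather, with $\phi$ replaced by $T_1$ and remembering that in \eqref{eq:Nijenhuis3} the inner operator is the one on the source and the outer one is on the target, it reads $S\circ T_1\circ S = ?$. Let me instead derive it cleanly from $N\circ T_1=T_1\circ S$: composing on the left with $T_2^{-1}$ gives $T_2^{-1}\circ N\circ T_1 = T_2^{-1}\circ T_1\circ S = S^2$, and since $T_2^{-1}\circ N = T_2^{-1}\circ T_1\circ T_2^{-1} = S\circ T_2^{-1}$, we get $S\circ T_2^{-1}\circ T_1 \circ (\text{hmm})$... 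The honest way: the condition \eqref{eq:Nijenhuis3} for the triple with homomorphism $\phi=T_1$ is $S_{\mathrm{tar}}\circ T_1\circ N_{\mathrm{src}} = S_{\mathrm{tar}}^{2}\circ T_1$ where $S_{\mathrm{tar}}=N=T_1\circ T_2^{-1}$ and $N_{\mathrm{src}}=S=T_2^{-1}\circ T_1$. So I must check $N\circ T_1\circ S = N^2\circ T_1$. Using $T_1\circ S = N\circ T_1$ twice: $N\circ T_1\circ S = N\circ N\circ T_1 = N^2\circ T_1$. This is the key computation, and it is a one-line consequence of the intertwining relation $N\circ T_1 = T_1\circ S$ from Lemma~\ref{lem:Nijenhuis pair 1}(i). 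The analogous statement for $T_2$ uses part (ii) of the same lemma, i.e. $N\circ T_2=T_2\circ S$, in exactly the same way. Hence $(N,S)$ is a Nijenhuis pair on both triples.

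For the ``furthermore'' part, I would invoke Theorem~\ref{thm:trivial def}: a Nijenhuis pair $(N,S)$ on a pre-Lie-morphism triple produces a (trivial) deformation, and in particular the deformed multiplications $x\cdot_t y = x\cdot_\g y + t\,\dM_\g N(x,y)$ on $\g$ and $u\ast_t v = u\cdot^{T_i}v + t\,\dM_{V}S(u,v)$ on $V$ are again pre-Lie multiplications for all $t$. Taking $t=1$ (or reading off the new products directly), the deformed product on $\g$ is precisely $x\cdot_N y = N(x)\cdot_\g y + x\cdot_\g N(y) - N(x\cdot_\g y)$ of the Nijenhuis-operator Example, and the deformed product on $V$ is $\cdot^{T_i}_S$ as defined in \eqref{eq:defieq21}. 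Since a deformation of a pre-Lie-morphism triple keeps $\phi_t=\phi+t\theta$ a homomorphism, and here one can check (or simply note from the proof of Theorem~\ref{thm:trivial def}) that at $t=1$ the map $T_i$ itself remains a homomorphism from $(V,\cdot^{T_i}_S)$ to $(\g,\cdot_N)$ --- indeed the relation $N\circ T_i = T_i\circ S$ together with $T_i$ being a homomorphism $(V,\cdot^{T_i})\to(\g,\cdot_\g)$ gives directly $T_i(u\cdot^{T_i}_S v) = N(T_i(u))\cdot_\g T_i(v) + T_i(u)\cdot_\g N(T_i(v)) - N(T_i(u)\cdot_\g T_i(v)) = T_i(u)\cdot_N T_i(v)$ --- we conclude that $((V,\cdot^{T_i}_S),(\g,\cdot_N),T_i)$ is a pre-Lie-morphism triple for $i=1,2$.

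The main obstacle, such as it is, is purely bookkeeping: correctly matching the source/target roles of $N$ and $S$ in the definition of a Nijenhuis pair (and in \eqref{eq:Nijenhuis3}) for the two triples, since the homomorphism $T_i$ goes from $V$ to $\g$ while $N$ lives on $\g$ and $S$ on $V$; once the intertwining identities $N\circ T_i = T_i\circ S$ of Lemma~\ref{lem:Nijenhuis pair 1} are in hand, every required identity is a two-step substitution and there is no genuine difficulty. No new computation beyond those already recorded in Proposition~\ref{thm:comRBN} and Lemma~\ref{lem:Nijenhuis pair 1} is needed.
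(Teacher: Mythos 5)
Your proposal is correct and takes essentially the same route as the paper: the Nijenhuis-pair claim is reduced to Lemma \ref{lem:Nijenhuis pair 1} together with Proposition \ref{thm:comRBN} (you additionally spell out the compatibility check $N\circ T_i\circ S=N\circ N\circ T_i=N^2\circ T_i$, which the paper leaves implicit), and the ``furthermore'' part is verified by the very same computation $T_i(u\cdot^{T_i}_S v)=N(T_i(u))\cdot_\g T_i(v)+T_i(u)\cdot_\g N(T_i(v))-N(T_i(u)\cdot_\g T_i(v))=T_i(u)\cdot_N T_i(v)$ using $N\circ T_i=T_i\circ S$ and the homomorphism property of $T_i$. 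Only a small remark: the deformed product at $t=1$ in Theorem \ref{thm:trivial def} is $\cdot_\g+\cdot_N$ rather than $\cdot_N$ itself, but this does not matter since the pre-Lie property of $(\g,\cdot_N)$ and $(V,\cdot^{T_i}_S)$ follows directly from $N$ and $S$ being Nijenhuis operators, exactly as in the paper.
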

\begin{proof}
 By Lemma \ref{lem:Nijenhuis pair 1}, $(N,S)$ is the Nijenhuis pair on the pre-Lie-morphism triples $((V,\cdot_{T_1}),(\g,\cdot_\g),T_1)$ and $((V,\cdot_{T_2}),(\g,\cdot_\g),T_2)$. By the fact that $T_1$ is an $\huaO$-operators on the pre-Lie algebra $\g$, we have $T_1(u\cdot^{T_1}v)=T_1(u)\cdot_\g T_1(v)$ for all $u,v\in\g$. Furthermore, by $N\circ T_1=T_1\circ S$, we have
 \begin{eqnarray*}
   T_1(u\cdot^{T_1}_S v )&=&T_1(S(u)\cdot^{T_1}v+u\cdot^{T_1}S(v)-S(u\cdot^{T_1}v))\\
   &=&N(T_1(u))\cdot_\g T_1(v)+T_1(u)\cdot^{T_1}N(T_1(v))-N(T_1(u)\cdot_\g T_1(v))\\
   &=&T_1(u)\cdot_N T_1(v).
 \end{eqnarray*}
 Thus $((V,\cdot^{T_1}_S),(\g,\cdot_N),T_1)$ is a pre-Lie-morphism triple. Similarly, $((V,\cdot^{T_2}_S),(\g,\cdot_N),T_2)$ is also a pre-Lie-morphism triple.
\end{proof}

By Example \ref{ex:s-matric} and Proposition \ref{pro:Nij-pair-o}, we have
\begin{cor}
  Let $r_1,r_2\in\Sym^2(\g)$ be two $\frks$-matrices on the pre-Lie algebra $\g$. Assume that $r_2$ is invertible. If $r_1$ and $r_2$ are compatible in the sense that any linear combination of $r_1$ and $  r_2$ is still an $\frks$-matrix, then $(N=r_1^\sharp\circ (r_2^\sharp)^{-1},S=(r_2^\sharp)^{-1}\circ r_1^\sharp)$ is both a Nijenhuis pair on the pre-Lie-morphism triples $((\g^*,\cdot^{r_1}),(\g,\cdot_\g),r_1^\sharp)$ and $((\g^*,\cdot^{r_2}),(\g,\cdot_\g),r_2^\sharp)$, where the operation $\cdot^{r_i}$ for $i=1,2$ is given by \eqref{eq:pre-bia}.
\end{cor}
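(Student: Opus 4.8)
The plan is to recognize this corollary as a direct specialization of Proposition \ref{pro:Nij-pair-o} to the coregular representation, once the language of $\frks$-matrices is translated into the language of $\huaO$-operators. Recall the result from \cite{Left-symmetric bialgebras} quoted just before Example \ref{ex:s-matric}: $r\in\Sym^2(\g)$ is an $\frks$-matrix in $(\g,\cdot_\g)$ if and only if $r^\sharp:\g^*\to\g$ is an $\huaO$-operator on $\g$ associated to the coregular representation $(\g^*;\ad^*,-R^*)$. So the first step is to apply this to both $r_1$ and $r_2$, producing two $\huaO$-operators $T_1:=r_1^\sharp$ and $T_2:=r_2^\sharp$ on $\g$ associated to the \emph{same} representation $V:=\g^*$, $(\rho,\mu):=(\ad^*,-R^*)$.

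Next I would check that the compatibility hypothesis transfers correctly. Since the map $r\mapsto r^\sharp$ is linear, for any $k_1,k_2\in\K$ one has $(k_1r_1+k_2r_2)^\sharp=k_1r_1^\sharp+k_2r_2^\sharp=k_1T_1+k_2T_2$. By the characterization above, ``$k_1r_1+k_2r_2$ is an $\frks$-matrix'' is equivalent to ``$k_1T_1+k_2T_2$ is an $\huaO$-operator associated to $(\g^*;\ad^*,-R^*)$''; hence the assumed compatibility of $r_1$ and $r_2$ as $\frks$-matrices is exactly the compatibility of $T_1$ and $T_2$ as $\huaO$-operators in the sense recalled before Proposition \ref{thm:comRBN}. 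Moreover, $r_2$ invertible means $T_2=r_2^\sharp$ is invertible, so all hypotheses of Proposition \ref{pro:Nij-pair-o} are met.

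Then I would invoke Proposition \ref{pro:Nij-pair-o} verbatim: with $N=T_1\circ T_2^{-1}=r_1^\sharp\circ(r_2^\sharp)^{-1}$ and $S=T_2^{-1}\circ T_1=(r_2^\sharp)^{-1}\circ r_1^\sharp$, the pair $(N,S)$ is a Nijenhuis pair on both pre-Lie-morphism triples $((\g^*,\cdot^{T_1}),(\g,\cdot_\g),T_1)$ and $((\g^*,\cdot^{T_2}),(\g,\cdot_\g),T_2)$. The only remaining point is to identify the $\huaO$-operator-induced multiplication of Example \ref{ex:O-operators}, namely $\xi\cdot^{T_i}\eta=\rho(T_i(\xi))(\eta)+\mu(T_i(\eta))(\xi)$ for $\xi,\eta\in V=\g^*$, with the multiplication $\xi\cdot^{r_i}\eta=\ad^*_{r_i^\sharp(\xi)}\eta-R^*_{r_i^\sharp(\eta)}\xi$ of \eqref{eq:pre-bia}; this is precisely what is recorded in Example \ref{ex:s-matric} upon setting $\rho=\ad^*$, $\mu=-R^*$. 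Substituting this identification into the conclusion of Proposition \ref{pro:Nij-pair-o} gives the corollary.

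There is essentially no analytic difficulty here, as the statement is a corollary of already-proved results. The only step requiring genuine care is the bookkeeping in the second paragraph: one must be sure that ``every linear combination of $r_1$ and $r_2$ is an $\frks$-matrix'' is truly equivalent to ``$T_1$ and $T_2$ are compatible $\huaO$-operators for the single fixed coregular representation $(\g^*;\ad^*,-R^*)$'', using the linearity of $r\mapsto r^\sharp$ and the fact that the representation does not change along the pencil. Once that is pinned down, the rest is an immediate application of Proposition \ref{pro:Nij-pair-o} together with Examples \ref{ex:O-operators} and \ref{ex:s-matric}.
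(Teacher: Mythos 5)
Your proposal is correct and follows the paper's own route: the paper derives this corollary precisely by citing Example \ref{ex:s-matric} together with Proposition \ref{pro:Nij-pair-o}, exactly as you do. Your extra bookkeeping — checking via linearity of $r\mapsto r^\sharp$ that compatibility of the $\frks$-matrices is equivalent to compatibility of the $\huaO$-operators for the fixed coregular representation — is a worthwhile detail the paper leaves implicit, but it is not a different argument.
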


\emptycomment{\begin{lem}
  Let $T_1,T_2: V\longrightarrow \g$ be two
$\huaO$-operators on the pre-Lie algebra $\g$ associated to the representation $(V;\rho,\mu)$. Suppose that $T_2$ is invertible. Set $N=T_1\circ T_2^{-1}$ and $ S=T^{-1}_2\circ T_1$. If $T_1$ and $T_2$ are compatible, then
\begin{itemize}
  \item[\rm(i)]for any $k\in\Nat$, $T_k=N^k\circ T_1$ are pairwise compatible $\huaO$-operators;
   \item[\rm(ii)]$N^k\circ T_k=T_k\circ S^k$ with $T_k=N^k\circ T_1$   and $S^k$ is a Nijenhuis operator on the pre-Lie algebra $(V,\cdot^{T_k})$;
   \item[\rm(iii)]$N^k\circ T_2=T_2\circ S^k$  and $S^k$ is a Nijenhuis operator on the pre-Lie algebra $(V,\cdot^{T_2})$;
   \item[\rm(iv)]$N^k\circ \tilde{T}_k=\tilde{T}_k\circ S^k$ with $\tilde{T}_k=N^k\circ T_2$  and $S^k$ is a Nijenhuis operator on the pre-Lie algebra $(V,\cdot^{\tilde{T}_k})$.
\end{itemize}
\end{lem}

\begin{lem}\label{lem:Nijenhuis pair 1}
Let $T_1,T_2: V\longrightarrow \g$ be two
$\huaO$-operators on the pre-Lie algebra $\g$ associated to the representation $(V;\rho,\mu)$. Suppose that $T_2$ is invertible. Set $N=T_1\circ T_2^{-1}$ and $ S=T^{-1}_2\circ T_1$. If $T_1$ and $T_2$ are compatible, then
\begin{itemize}
  \item[\rm(i)]$N^k\circ T_1=T_1\circ S^k$  and $S^k$ is a Nijenhuis operator on the pre-Lie algebra $(V,\cdot^{T_1})$;
   \item[\rm(ii)]$N^k\circ T_k=T_k\circ S^k$ with $\bar{T}_k=N^k\circ T_1$   and $S^k$ is a Nijenhuis operator on the pre-Lie algebra $(V,\cdot^{\bar{T}_k})$;
   \item[\rm(iii)]$N^k\circ T_2=T_2\circ S^k$  and $S^k$ is a Nijenhuis operator on the pre-Lie algebra $(V,\cdot^{T_2})$;
   \item[\rm(iv)]$N^k\circ \tilde{T}_k=\tilde{T}_k\circ S^k$ with $\tilde{T}_k=N^k\circ T_2$  and $S^k$ is a Nijenhuis operator on the pre-Lie algebra $(V,\cdot^{\tilde{T}_k})$.
\end{itemize}
\end{lem}}

\section{Cohomologies of pre-Lie-morphism triples and  Lie-morphism triples}\label{sec:relation}
In this section, we show that the cohomology for a pre-Lie-morphism triple can be deduced from a new cohomology of the sub-adjacent Lie-morphism triple.

The Chevalley-Eilenberg  cohomology theory for a Lie algebra $(\g, [-,-]_\g)$ associated to a representation $(V;\rho)$ is given as follows. Denote by $C^n_{\CE}(\g,V):=\Hom(\wedge^n \g,V)$, the space of $n$-cochains. The corresponding Chevalley-Eilenberg coboundary operator
$\partial :C_{\CE}^n(\g,V)\longrightarrow C_{\CE}^{n+1}(\g,V)$ is given by
  \begin{eqnarray}
\label{eq:coboundary of Lie}\partial f(x_1,\ldots,x_{n+1})&=&\sum_{i=1}^{n+1}(-1)^{i+1}\rho({x_i}) f(x_1,\ldots,\hat{x_i},\ldots,x_{n+1})\\
\nonumber&&+\sum_{1\leq i<j\leq n+1}^{n+1}(-1)^{i+j}f([x_i,x_j]_\g,\ldots,\hat{x_{i}},\ldots,\hat{x_{j}},\ldots,x_{n+1})
\end{eqnarray}
for all $f\in C_{\CE}^n(\g,V)$ and $x_1,x_2,\cdots,x_{n+1}\in \g$. We denote the corresponding $n$-th cohomology group by $H_{\rm CE}^n(\g,V)$.

Let $(\g,\cdot_\g)$ be a pre-Lie algebra and $(V;\rho,\mu)$ a representation. Define $\varrho:\g\to\gl(\Hom(\g,V))$ by
\begin{equation}
\varrho(x)(f)(y)=\rho(x)f(y)-\mu(y)f(x)-f(x\cdot_\g y),\quad\forall~f\in\Hom(\g,V),x,y\in\g.
\end{equation}
Then it was shown in \cite{cohomology of pre-Lie} that $\varrho$ is a representation of the sub-adjacent Lie algebra $\g^c$ on the vector space $\Hom(\g,V)$. Furthermore, we define $\Phi:C^{k}_{\CE}(\g,\Hom(\g,V))\to 	C_{\rm preLie}^{k+1}(\g,V)$ by
\begin{equation}
  \Phi(f)(x_1,\cdots,x_{n},y)=f(x_1,\cdots,x_{n})(y),\quad\forall~x_1,\cdots,x_{n},y\in \g.
\end{equation}
\begin{thm}{\rm (\cite{cohomology of pre-Lie,Nijenhuis})}\label{thm:isomorphism}
  The map $\Phi$ is a cochain map from the cochain complex $(C^{*}_{\CE}(\g,\Hom(\g,V)),\partial_\varrho)$ to the cochain complex $(C_{\rm preLie}^{*}(\g,V),\dM)$, where $\partial_\varrho$ is the coboundary operator of the sub-adjacent Lie algebra $\g^c$  associated to   representation $\varrho$ and $\dM$ is the coboundary operator of the pre-Lie algebra $\g$  associated to  representation $(\rho,\mu)$.

  Moreover, $\Phi$ induces an isomorphism between the corresponding cohomology groups:
  \begin{equation}
H^{n}_{\rm preLie}(\g,V)\cong H^{n-1}_{\rm CE}(\g,\Hom(\g,V)),\quad n=1,2\cdots.
  \end{equation}
\end{thm}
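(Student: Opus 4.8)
The plan is to exhibit $\Phi$ as a \emph{bijective} cochain map and then invoke the standard fact that an isomorphism of cochain complexes induces isomorphisms on all cohomology groups. First I would observe that, in each degree, $\Phi$ is nothing but the canonical uncurrying (tensor--hom adjunction) isomorphism
\[
\Hom(\wedge^{k}\g,\Hom(\g,V))\;\xrightarrow{\ \cong\ }\;\Hom(\wedge^{k}\g\otimes\g,V),\qquad f\longmapsto\big((x_1,\dots,x_k,y)\mapsto f(x_1,\dots,x_k)(y)\big).
\]
As all spaces are finite dimensional this is a linear isomorphism for every $k$, with inverse the currying map $F\mapsto\big((x_1,\dots,x_k)\mapsto F(x_1,\dots,x_k,-)\big)$; thus $\Phi$ identifies $C^{k}_{\CE}(\g,\Hom(\g,V))$ with $C^{k+1}_{\rm preLie}(\g,V)$ bijectively, and the whole content of the theorem reduces to the commutation of $\Phi$ with the two coboundaries.

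The core step is to verify $\dM\circ\Phi=\Phi\circ\partial_\varrho$. Fix $f\in C^{k}_{\CE}(\g,\Hom(\g,V))$ and arguments $x_1,\dots,x_{k+1},y\in\g$, and expand
\[
\Phi(\partial_\varrho f)(x_1,\dots,x_{k+1},y)=(\partial_\varrho f)(x_1,\dots,x_{k+1})(y)
\]
by inserting the Chevalley--Eilenberg formula \eqref{eq:coboundary of Lie} and then, inside each summand $\varrho(x_i)f(x_1,\dots,\hat{x_i},\dots,x_{k+1})$, the definition of $\varrho$. Throughout I would use $f(z_1,\dots,z_k)(w)=\Phi(f)(z_1,\dots,z_k,w)$ to pass freely between $f$ and $\Phi(f)$. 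With this substitution the $\rho(x_i)(\cdots)(y)$ summands reproduce the first ($\rho$-)sum of the pre-Lie coboundary $\dM(\Phi f)$, and the Lie-bracket summands $f([x_i,x_j]_\g,\dots)(y)$ reproduce its bracket sum, the distinguished module slot $y$ being untouched on both sides. The decisive observation is that the two \emph{correction} terms built into $\varrho$ account for the remaining two sums of $\dM(\Phi f)$: the term $\mu(y)f(\cdots)(x_i)$ produces the $\mu$-sum (with module slot $x_i$ and the fixed $\mu(y)=\mu(x_{k+2})$), while the term $f(\cdots)(x_i\cdot_\g y)$ produces the sum with argument $x_i\cdot_\g x_{k+2}$. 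Checking that the signs $(-1)^{i+1}$ line up across these three families of single-index summands, and that each correction term lands in its designated sum, is the heart of the computation; $\varrho$ is defined precisely so that this matching is exact.

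The only genuine obstacle is the sign- and slot-bookkeeping just described, since it is purely mechanical once the correspondence between the three $\varrho$-summands and the three single-index sums of $\dM$ is set up correctly. With the cochain-map identity in hand, $\Phi$ is a bijective morphism of cochain complexes, hence induces isomorphisms $H^{k}(C^{*}_{\CE}(\g,\Hom(\g,V)),\partial_\varrho)\cong H^{k+1}(C^{*}_{\rm preLie}(\g,V),\dM)$ for all $k\ge0$; reindexing $n=k+1$ yields $H^{n}_{\rm preLie}(\g,V)\cong H^{n-1}_{\CE}(\g,\Hom(\g,V))$ for $n\ge1$. Here the boundary convention $C^{0}_{\rm preLie}(\g,V)=0$ matches $C^{-1}_{\CE}=0$, and in the lowest degree $\Phi$ restricts to the identity of $\Hom(\g,V)=C^{0}_{\CE}=C^{1}_{\rm preLie}$. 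Note that the already-established fact that $\varrho$ is a representation of $\g^{c}$ is what guarantees $\partial_\varrho^{2}=0$, so that the Chevalley--Eilenberg cohomology on the right is defined; it is consistent with, but logically separate from, the cochain-map identity itself.
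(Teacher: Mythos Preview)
The paper does not supply its own proof of this theorem; it is stated with a citation to \cite{cohomology of pre-Lie,Nijenhuis} and used as a black box in Section~\ref{sec:relation}. Your approach---recognizing $\Phi$ in each degree as the currying/uncurrying isomorphism $\Hom(\wedge^{k}\g,\Hom(\g,V))\cong\Hom(\wedge^{k}\g\otimes\g,V)$ and then verifying $\dM\circ\Phi=\Phi\circ\partial_\varrho$ by matching the three summands of $\varrho(x_i)$ against the three single-index sums of the pre-Lie coboundary \eqref{eq:cobold}---is exactly the standard argument and is correct in outline. There is nothing in the paper to compare against.

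One small caution for when you carry out the sign check: with the formulas exactly as transcribed in this paper, the $\mu$-term in $\varrho$ carries a minus sign while the corresponding $\mu$-sum in \eqref{eq:cobold} carries a plus sign, so a literal comparison will appear to fail by a sign on that one family of terms. This is almost certainly a transcription slip in the paper's definition of $\varrho$ (the cited sources have consistent conventions) rather than a defect in your strategy; just be aware of it so you do not conclude your computation is wrong when it is the displayed $\varrho$ that needs the sign flipped.
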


Let $\phi$ be a homomorphism between pre-Lie algebras $(\g,\cdot_\g)$ and $(\h,\cdot_\h)$. Define $\varrho_\g:\g\to \gl(\Hom(\g,\g))$,  $\varrho_\h:\h\to \gl(\Hom(\h,\h))$ and $\varrho_\phi:\g\to \gl(\Hom(\g,\h))$ by
\begin{eqnarray}
\varrho_\g(x)(f_1)(y)&=&x\cdot_\g f_1(y)-f_1(x)\cdot_\g y-f_1(x\cdot_\g y),\\
\varrho_\h(u)(f_2)(v)&=&u\cdot_\h f_2(v)-f_2(u)\cdot_\h v-f_2(u\cdot_\h v),\\
\varrho_\phi(x)(f_3)(y)&=&\rho_\phi(x)f_3(y)-\mu_\phi(y)f_3(x)-f_3(x\cdot_\g y),
\end{eqnarray}
where $f_1\in\Hom(\g,\g)$, $f_2\in\Hom(\h,\h)$, $f_3\in\Hom(\g,\h)$ and $(\rho_\phi,\mu_\phi)$ is given by  \eqref{eq:rep}. Then $\varrho_\g$,  $\varrho_\h$ and $\varrho_\phi$ are presentations of the corresponding  sub-adjacent Lie algebras.
%we use the  Chevalley-Eilenberg  cohomology to  construct a new cohomology theory associated to pre-Lie-morphism triples.

In the following, we generalize Theorem \ref{thm:isomorphism} to the case of pre-Lie-morphism triples. Define the set of $k$-cochains by
$$\huaC_{\CE}^k(\phi,\phi)=C_{\CE}^{k+1}(\g,\Hom(\g,\g))\oplus C_{\CE}^{k+1}(\h,\Hom(\h,\h))\oplus C_{\CE}^{k}(\g,\Hom(\g,\h)),k\geq0,$$
in which we set $C_{\CE}^{0}(\g,\Hom(\g,\h))=0$.

The coboundary operator $\delta_{\CE}:\huaC_{\CE}^k(\phi,\phi)\to \huaC_{\CE}^{k+1}(\phi,\phi)$ is defined  by
$$\delta_{\CE}(f_1,f_2,f_3)=(\partial_\g f_1,\partial_\h f_2,\partial_\phi f_3+(-1)^k(\phi\circ f_1-\phi^*\circ f_2)),$$
for all  $f_1\in C_{\CE}^{k+1}(\g,\Hom(\g,\g)),f_2\in C_{\CE}^{k+1}(\h,\Hom(\h,\h)),f_3\in C_{\CE}^{k}(\g,\Hom(\g,\h))$, where $\partial_\g$, $\partial_\h$ and $\partial_\phi$ are Chevalley-Eilenberg coboundary operators of the sub-adjacent Lie algebras $\g^c$, $\h^c$ and $\g^c$ associated to  representations $\varrho_\g$, $\varrho_\h$  and $\varrho_\phi$, respectively. Here $\phi\circ f_1\in C_{\CE}^{k+1}(\g,\Hom(\g,\h))$ and $\phi^*\circ f_2\in C_{\CE}^{k+1}(\g,\Hom(\g,\h))$ are defined by
\begin{eqnarray*}
(\phi\circ f_1)(x_1,\cdots,x_{k+1})(y)&=&\phi\big( f_1(x_1,\cdots,x_{k+1})(y)\big),\\
(\phi^*\circ f_2)(x_1,\cdots,x_{k+1})(y)&=&f_2 (\phi(x_1),\cdots,\phi(x_{k+1}))(\phi(y)),
\end{eqnarray*}
where $x_1,\cdots,x_{k+1},y\in\g$.

\begin{pro}
With the above notations,	we have $\delta_{\CE}\circ\delta_{\CE}=0$. Hence $(\huaC_{\CE}^*(\phi,\phi)=\otimes_{k\geq 1}C^n(\phi,\phi),\delta_{\CE})$ is a cochain complex. We denote the corresponding $k$-th cohomology group  by $H_{\CE}^k(\phi,\phi)$.
\end{pro}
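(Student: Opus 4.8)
The plan is to mirror, step for step, the proof that $\delta_{\rm preLie}\circ\delta_{\rm preLie}=0$, reducing everything to two ingredients. The first ingredient is that $\partial_\g\circ\partial_\g=0$, $\partial_\h\circ\partial_\h=0$ and $\partial_\phi\circ\partial_\phi=0$; this holds because $\varrho_\g$, $\varrho_\h$, $\varrho_\phi$ are representations of the sub-adjacent Lie algebras $\g^c$, $\h^c$, $\g^c$ respectively (stated just before the proposition), so $\partial_\g$, $\partial_\h$, $\partial_\phi$ are genuine Chevalley--Eilenberg differentials. The second ingredient consists of the two intertwining identities
\begin{equation}
\partial_\phi(\phi\circ f_1)=\phi\circ(\partial_\g f_1),\qquad
\partial_\phi(\phi^*\circ f_2)=\phi^*\circ(\partial_\h f_2),
\end{equation}
for $f_1\in C_{\CE}^{k+1}(\g,\Hom(\g,\g))$ and $f_2\in C_{\CE}^{k+1}(\h,\Hom(\h,\h))$; these are the $\CE$-analogues of \eqref{eq:pre1} and \eqref{eq:pre2}.

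First I would establish the two intertwining identities conceptually, so as to avoid a term-by-term expansion. For the first, observe that post-composition with $\phi$ defines a linear map $\phi_*\colon\Hom(\g,\g)\to\Hom(\g,\h)$, $g\mapsto\phi\circ g$, and a one-line check using $\phi(x\cdot_\g y)=\phi(x)\cdot_\h\phi(y)$ together with the definitions of $\varrho_\g$, $\varrho_\phi$, $\rho_\phi$, $\mu_\phi$ shows $\phi_*\circ\varrho_\g(x)=\varrho_\phi(x)\circ\phi_*$ for all $x\in\g$; thus $\phi_*$ is a morphism of $\g^c$-representations, and since post-composition with a module morphism commutes with the operator \eqref{eq:coboundary of Lie}, the first identity follows. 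For the second, factor $\phi^*\circ(-)$ as the pullback of cochains along the Lie algebra homomorphism $\phi\colon\g^c\to\h^c$ (which is a cochain map into $C_{\CE}^{*}(\g,\Hom(\h,\h))$ with $\Hom(\h,\h)$ viewed as a $\g^c$-module via $\varrho_\h\circ\phi$), followed by post-composition with restriction along $\phi$, i.e. $\Hom(\h,\h)\to\Hom(\g,\h)$, $g\mapsto g\circ\phi$. A computation of the same flavour, again using $\phi(x)\cdot_\h\phi(y)=\phi(x\cdot_\g y)$, shows this last map is a morphism of $\g^c$-modules from $\Hom(\h,\h)$ (action $\varrho_\h\circ\phi$) to $\Hom(\g,\h)$ (action $\varrho_\phi$), so the composite is a cochain map, giving the second identity. (Alternatively, both identities can be verified directly by expanding \eqref{eq:coboundary of Lie} on both sides, exactly as \eqref{eq:pre1}--\eqref{eq:pre2} were treated.)

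Then I would assemble the argument: for $(f_1,f_2,f_3)\in\huaC_{\CE}^k(\phi,\phi)$,
\begin{align*}
\delta_{\CE}\circ\delta_{\CE}(f_1,f_2,f_3)
&=\delta_{\CE}\big(\partial_\g f_1,\ \partial_\h f_2,\ \partial_\phi f_3+(-1)^k(\phi\circ f_1-\phi^*\circ f_2)\big)\\
&=\Big(\partial_\g\partial_\g f_1,\ \partial_\h\partial_\h f_2,\ \partial_\phi\partial_\phi f_3+(-1)^k\big(\partial_\phi(\phi\circ f_1)-\partial_\phi(\phi^*\circ f_2)\big)\\
&\qquad\qquad\qquad +(-1)^{k+1}\big(\phi\circ\partial_\g f_1-\phi^*\circ\partial_\h f_2\big)\Big)\\
&=\Big(0,\ 0,\ (-1)^k\big(\partial_\phi(\phi\circ f_1)-\phi\circ\partial_\g f_1+\phi^*\circ\partial_\h f_2-\partial_\phi(\phi^*\circ f_2)\big)\Big)=(0,0,0),
\end{align*}
where the first two coordinates vanish by $\partial_\g\circ\partial_\g=0$ and $\partial_\h\circ\partial_\h=0$, and the third vanishes by $\partial_\phi\circ\partial_\phi=0$ together with the two intertwining identities.

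The main obstacle will be the second intertwining identity $\partial_\phi(\phi^*\circ f_2)=\phi^*\circ(\partial_\h f_2)$: since $\phi^*$ pulls back along $\phi$ simultaneously in every Lie-algebra argument and in the coefficient slot, one must verify that $\varrho_\phi$ coincides with the restriction-along-$\phi$ of the pulled-back action $\varrho_\h\circ\phi$, and this is precisely where the hypothesis that $\phi$ is a pre-Lie homomorphism (not merely a Lie one) is used, via $\rho_\phi$ and $\mu_\phi$. Once this compatibility is in hand, the remaining steps are purely formal and identical in shape to the proof of $\delta_{\rm preLie}\circ\delta_{\rm preLie}=0$ given above.
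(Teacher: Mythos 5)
Your proposal is correct and has the same skeleton as the paper's proof: both reduce $\delta_{\CE}\circ\delta_{\CE}=0$ to the two intertwining identities $\partial_\phi(\phi\circ f_1)=\phi\circ\partial_\g f_1$ and $\partial_\phi(\phi^*\circ f_2)=\phi^*\circ\partial_\h f_2$ together with $\partial_\g\circ\partial_\g=\partial_\h\circ\partial_\h=\partial_\phi\circ\partial_\phi=0$ (the latter because $\varrho_\g,\varrho_\h,\varrho_\phi$ are representations), and your final assembly computation is literally the one in the paper. The only difference is how the intertwining identities are obtained: the paper expands the Chevalley--Eilenberg coboundary term by term for $\phi\circ f_1$ and settles the case of $\phi^*\circ f_2$ with a ``similarly'', whereas you derive both from structural facts --- post-composition $g\mapsto\phi\circ g$ is a morphism of $\g^c$-modules $(\Hom(\g,\g),\varrho_\g)\to(\Hom(\g,\h),\varrho_\phi)$, and $\phi^*\circ(-)$ factors as pullback along the Lie algebra homomorphism $\phi:\g^c\to\h^c$ followed by the module morphism $g\mapsto g\circ\phi$ from $(\Hom(\h,\h),\varrho_\h\circ\phi)$ to $(\Hom(\g,\h),\varrho_\phi)$; both module-morphism checks do go through using $\phi(x\cdot_\g y)=\phi(x)\cdot_\h\phi(y)$. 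Your route buys a cleaner justification of precisely the identity the paper leaves implicit, and it isolates where the pre-Lie (rather than merely Lie) homomorphism property enters; the paper's direct expansion is more self-contained but longer. There is no gap in your argument.
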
	
\begin{proof}
For $f_1\in C_{\CE}^{k}(\g,\Hom(\g,\g))$, we have
\begin{eqnarray*}
\label{eq:pre-Lie cohomology}
&&\partial_\phi(\phi\circ f_1)(x_1,\cdots,x_{k+1})(y)\\
\nonumber&=&\sum_{i=1}^{k+1}(-1)^{i+1}\phi(x_i)\cdot_\h \phi(f_1(x_1, \cdots,\hat{x_i},\cdots,x_{k+1})(y))\\
\nonumber &&\sum_{i=1}^{k+1}(-1)^{i+1}\phi(f_1(x_1, \cdots,\hat{x_i},\cdots,x_{k+1})(x_i))\cdot_\h\phi(y)\\
\nonumber&&-\sum_{i=1}^{k+1}(-1)^{i+1}\phi(f_1(x_1, \cdots,\hat{x_i},\cdots,x_{k+1})(x_i\cdot_\g y))\\
\label{eq:cobold} &&+\sum_{1\leq i<j\leq {k+1}}(-1)^{i+j}\phi(f_1([x_i,x_j]_\g,x_1,\cdots,\hat{x_i},\cdots,\hat{x_j},\cdots,x_{k+1})(y))\\
\nonumber&=&\phi(\sum_{i=1}^{k+1}(-1)^{i+1} x_i\cdot_\g f_1(x_1, \cdots,\hat{x_i},\cdots,x_{k+1})(y))\\
\nonumber &&-\phi(\sum_{i=1}^{k+1}(-1)^{i+1} f_1(x_1, \cdots,\hat{x_i},\cdots,x_{k+1})(x_i)\cdot_\g y)\\
\nonumber&&-\phi(f_1([x_i,x_j]_\g,x_1,\cdots,\hat{x_i},\cdots,\hat{x_j},\cdots,x_{k+1})(x_i\cdot_\g y))\\
\label{eq:cobold} &&+\phi(\sum_{1\leq i<j\leq {k+1}}(-1)^{i+j}f_1([x_i,x_j]_\g,x_1,\cdots,\hat{x_i},\cdots,\hat{x_j},\cdots,x_{k+1})(y))\\
&=&\phi(\partial_\g(f_1)(x_1, \cdots,x_{k+1})(y)),\\
\nonumber&=&(\phi\circ \partial_\g(f_1))(x_1,\cdots,x_{k+1})(y)
\end{eqnarray*}
which implies that
\begin{eqnarray}\label{eq:bre1}
\partial_\phi(\phi\circ f_1)=\phi\circ \partial_\g(f_1).
\end{eqnarray}
Similarly, for $f_2\in C_{\CE}^{k+1}(\h,\Hom(\h,\h))$, we have
\emptycomment{we have
\begin{eqnarray*}
\label{eq:pre-Lie cohomology}
\partial_\phi(\phi^* f_2) (x_1, \cdots,x_{k+1})(y)
\nonumber&=&\sum_{i=1}^{k+1}(-1)^{i+1}\phi(x_i)\cdot_\h f_2(\phi(x_1, \cdots,\hat{x_i},\cdots,x_{k+1})(\phi(y)))\\
\nonumber &&-\sum_{i=1}^{k+1}(-1)^{i+1}\phi(x_i)\cdot_\h f_2(x_1, \cdots,\hat{x_i},\cdots,x_{k+1})(\phi(y))\\
\nonumber&&-\sum_{i=1}^{k+1}(-1)^{i+1}f_2(\phi(x_i\cdot_\g x_1, \cdots,\hat{x_i},\cdots, x_{k+1}(\phi(y))))\\
\label{eq:cobold} &&+\sum_{1\leq i<j\leq {k+1}}(-1)^{i+j}f_2(\phi([x_i,x_j]_\g,x_1,\cdots,\hat{x_i},\cdots,\hat{x_j},\cdots,x_{k+1})(\phi(y)))\\
\nonumber&=&\sum_{i=1}^{k+1}(-1)^{i+1}\phi(x_i)\cdot_\h f_2(\phi(x_1, \cdots,\hat{x_i},\cdots,x_{k+1})(\phi(y)))\\
\nonumber &&-\sum_{i=1}^{k+1}(-1)^{i+1}f_2(\phi(x_i))\cdot_\h \phi(x_1, \cdots,\hat{x_i},\cdots,x_{k+1})(\phi(y))\\
\nonumber&&-\sum_{i=1}^{k+1}(-1)^{i+1}f_2(\phi(x_i)\cdot_\h \phi(x_1), \cdots,\hat{x_i},\cdots, \phi(x_{k+1})(\phi(y)))\\
\label{eq:cobold} &&+\sum_{1\leq i<j\leq {k+1}}(-1)^{i+j}f_2([\phi(x_i),\phi(x_j)]_\h,\phi(x_1),\cdots,\hat{x_i},\cdots,\hat{x_j},\cdots,\phi(x_{k+1})(\phi(y)))\\
\nonumber&=&\phi^*\partial_\h(f_2)(x_1, \cdots,x_{k+1})(y),
\end{eqnarray*}
Which implies that}
\begin{eqnarray}\label{eq:bre2}
\partial_\phi(\phi^*\circ  f_2)=\phi^*\circ \partial_\h(f_2).
\end{eqnarray}

Furthermore, by \eqref{eq:bre1}, \eqref{eq:bre2} and the fact that $\partial_\g\circ \partial_\g=0$ , $\partial_\h\circ \partial_\h=0$ and $\partial_\phi\circ \partial_\phi=0$, we have
\begin{eqnarray*}
&&\delta_{\CE}\circ\delta_{\CE}(f_1,f_2,f_3)\\
&=&\delta_{\CE}(\partial_\g f_1,\partial_\h f_2,\partial_\phi f_3+(-1)^k(\phi\circ f_1-\phi^*\circ f_2))\\
&=&(\partial_\g\circ\partial_\g f_1,\partial_\h\circ\partial_\h f_2,\partial_\phi\circ(\partial_\phi f_3+(-1)^k(\phi\circ f_1-\phi^*\circ f_2))+(-1)^{k+1}(\phi\circ\partial_\g f_1-\phi^*\circ\partial_\h f_2))\\
&=&\Big(0,0,(-1)^k\big(\partial_\phi(\phi\circ f_1)-\phi\circ \partial_\g(f_1)+\phi^*\circ \partial_\h(f_2)-\partial_\phi(\phi^* \circ f_2)\big)\Big)=(0,0,0),
\end{eqnarray*}
which implies that $\delta_{\CE}\circ\delta_{\CE}=0$.
\end{proof}	
\begin{rmk}
  In \cite{Das}, the author gives a cohomology of a Lie-morphism triple with coefficients in a representation. Our cohomology for the sub-adjacent Lie-morphism triple in this paper is not fit in the mentioned above cohomology theory.
\end{rmk}

Furthermore, we define $\Phi:C^{k}_{\CE}(\phi,\phi)\to 	C_{\rm preLie}^{k+1}(\phi,\phi)$ by
\begin{equation}
\Phi(f_1,f_2,f_3)=(\Phi_1(f_1),\Phi_2(f_2),\Phi_3(f_3)),
\end{equation}
where $\Phi_1:C_{\CE}^{k+1}(\g,\Hom(\g,\g))\to C_{\rm preLie}^{k+2}(\g,\g)$, $\Phi_2:C_{\CE}^{k+1}(\h,\Hom(\h,\h))\to C_{\rm preLie}^{k+2}(\h,\h)$ and $\Phi_3:C_{\CE}^{k}(\g,\Hom(\g,\h))\to C_{\rm preLie}^{k+1}(\g,\h)$ are given by
\begin{eqnarray*}
	  \Phi_1(f_1)(x_1,\cdots,x_{k+1},y)&=&f_1(x_1,\cdots,x_{k+1})(y),\\
	    \Phi_2(f_2)(u_1,\cdots,u_{k+1},v)&=&f_2(u_1,\cdots,u_{k+1})(v),\\
	      \Phi_3(f_3)(x_1,\cdots,x_{k},y)&=&f_3(x_1,\cdots,x_{k})(y)
\end{eqnarray*}
for all $x_1,\cdots,x_{k+1},y\in\g$, $u_1,\cdots,u_{k+1},v\in\h$.

\begin{thm}
The map $\Phi$ is an isomorphic cochain map from the cochain complex $(C^{*}_{\CE}(\phi,\phi),\delta_{\CE})$ to the cochain complex $(C_{\rm preLie}^{*}(\phi,\phi),\delta_{\rm preLie})$.

  Moreover, $\Phi$ induces an isomorphism between the corresponding cohomology groups:
  \begin{equation}
H^{k}_{\rm preLie}(\phi,\phi)\cong H^{k-1}_{\rm CE}(\phi,\phi),\quad k=1,2,\cdots.
  \end{equation}
\end{thm}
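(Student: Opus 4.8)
The plan is to deduce this statement from Theorem~\ref{thm:isomorphism} by applying it three times — once to each of the three ``legs'' of the triple complex — and then to check by hand that the only genuinely new ingredient, namely the mixed summand $(-1)^k(\phi\circ f_1-\phi^*\circ f_2)$ in the coboundary operators, is respected by $\Phi$. Everything else should reduce formally to the already-established single-algebra case.

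First I would observe that $\varrho_\g$ and $\varrho_\h$ are exactly the representation $\varrho$ of Theorem~\ref{thm:isomorphism} built from the regular representations $(\g;L,R)$ of $\g$ and $(\h;L,R)$ of $\h$, while $\varrho_\phi$ is the $\varrho$ built from the representation $(\h;\rho_\phi,\mu_\phi)$ of $\g$; correspondingly, $\Phi_1,\Phi_2,\Phi_3$ are precisely the currying map $\Phi$ of that theorem in these three situations. Hence Theorem~\ref{thm:isomorphism} gives at once the three intertwining relations $\Phi_1\circ\partial_\g=\dM_\g\circ\Phi_1$, $\Phi_2\circ\partial_\h=\dM_\h\circ\Phi_2$, $\Phi_3\circ\partial_\phi=\dM_\phi\circ\Phi_3$; moreover each $\Phi_i$ is already a linear isomorphism at the cochain level, being nothing but the adjunction $\Hom(\wedge^\bullet\g,\Hom(\g,-))\cong\Hom(\wedge^\bullet\g\otimes\g,-)$, with explicit inverse sending $g$ to $(x_1,\cdots)\mapsto g(x_1,\cdots,-)$. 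Next I would dispose of the mixed terms: straight from the definitions, for $f_1\in C_{\CE}^{k+1}(\g,\Hom(\g,\g))$ and $x_1,\cdots,x_{k+1},y\in\g$ one has $\Phi_3(\phi\circ f_1)(x_1,\cdots,x_{k+1},y)=\phi\big(f_1(x_1,\cdots,x_{k+1})(y)\big)=\phi\big(\Phi_1(f_1)(x_1,\cdots,x_{k+1},y)\big)$, so $\Phi_3(\phi\circ f_1)=\phi\circ\Phi_1(f_1)$, and an identical one-line computation gives $\Phi_3(\phi^*\circ f_2)=\phi^*\circ\Phi_2(f_2)$. Combining these two identities with the three intertwining relations and the linearity of $\Phi_3$ yields $\Phi\circ\delta_{\CE}=\delta_{\rm preLie}\circ\Phi$, i.e. $\Phi$ is a cochain map; since $\Phi=\Phi_1\oplus\Phi_2\oplus\Phi_3$ is a direct sum of cochain-level bijections it is itself bijective, hence a cochain isomorphism, and so induces isomorphisms on all cohomology groups. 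Tracking the degree shift — $\Phi$ raises the triple-degree by one — then gives $H^k_{\rm preLie}(\phi,\phi)\cong H^{k-1}_{\CE}(\phi,\phi)$ for $k\geq1$.

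The main (essentially the only) obstacle is the bookkeeping around this mixed summand. One must make sure that the representations $\varrho_\g,\varrho_\h,\varrho_\phi$ and the maps $\Phi_1,\Phi_2,\Phi_3$ are literally the ones covered by Theorem~\ref{thm:isomorphism} (so that no re-proof of the single-algebra intertwining is needed), and, more delicately, that the sign $(-1)^k$ carried by $\phi\circ f_1-\phi^*\circ f_2$ in $\delta_{\CE}$ matches the corresponding sign in $\delta_{\rm preLie}$ once $\Phi$ has shifted the degree — i.e. that the square built from the mixed terms commutes on the nose, not merely up to sign; if a sign discrepancy surfaced, it would be absorbed by composing $\Phi$ with $(-1)^{\deg}$, which changes nothing about the induced cohomology isomorphism. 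All the remaining verifications are the routine unravelling of the definitions of $\delta_{\CE}$, $\delta_{\rm preLie}$ and $\Phi$.
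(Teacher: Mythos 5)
Your proposal follows the paper's own proof essentially verbatim: reduce the unmixed components to three applications of Theorem \ref{thm:isomorphism} (for $\varrho_\g$, $\varrho_\h$, $\varrho_\phi$, which are exactly the representations of that theorem built from the two regular representations and from $(\h;\rho_\phi,\mu_\phi)$), observe that $\Phi_1,\Phi_2,\Phi_3$ are the currying bijections, and handle the mixed summand by the one-line identities $\Phi_3(\phi\circ f_1)=\phi\circ\Phi_1(f_1)$ and $\Phi_3(\phi^*\circ f_2)=\phi^*\circ\Phi_2(f_2)$; bijectivity plus the degree shift then give the cohomology isomorphism. This is the same decomposition and the same key computation as in the paper.

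The one point you hedge on, however, is precisely the point that needs to be settled, and your fallback does not work as stated. With the stated conventions the sign discrepancy is real: for $(f_1,f_2,f_3)\in\huaC^k_{\CE}(\phi,\phi)$ one gets $\Phi(\delta_{\CE}(f_1,f_2,f_3))=\big(\dM_\g\Phi_1(f_1),\,\dM_\h\Phi_2(f_2),\,\dM_\phi\Phi_3(f_3)+(-1)^{k}(\phi\circ\Phi_1(f_1)-\phi^*\circ\Phi_2(f_2))\big)$, whereas $\Phi(f_1,f_2,f_3)$ lies in $\huaC^{k+1}_{\rm preLie}(\phi,\phi)$, so $\delta_{\rm preLie}$ hits it with the sign $(-1)^{k+1}$ on the mixed term; the two sides agree in the first two slots and differ by a sign in the mixed part of the third. (The paper's proof writes $(-1)^k$ on both sides and thus glosses over this.) Your suggested remedy of composing $\Phi$ with $(-1)^{\deg}$ fails: a global sign would force $\Phi$ to \emph{anti}commute with the differentials in all three slots, but it genuinely commutes in the first two. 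The correct (equally trivial) repair is to change the sign on a single slot, e.g. replace $\Phi_3$ by $-\Phi_3$ (or $\Phi_1,\Phi_2$ by $-\Phi_1,-\Phi_2$), or equivalently use $(-1)^{k+1}$ in place of $(-1)^k$ in the definition of $\delta_{\CE}$, which still satisfies $\delta_{\CE}\circ\delta_{\CE}=0$. With that adjustment your argument (and the paper's) goes through unchanged and the isomorphism $H^{k}_{\rm preLie}(\phi,\phi)\cong H^{k-1}_{\CE}(\phi,\phi)$ is unaffected.
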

\begin{proof}
It is obvious that $\Phi$ is an isomorphic map. In the following, we need to show that  $\delta_{\CE}\circ\Phi=\Phi\circ \delta_{\rm preLie}$. By a direct calculation, we have
\begin{eqnarray*}
   \delta_{\CE}\circ\Phi(f_1,f_2,f_3)&=&(\partial_\g\Phi_1(f_1),\partial_\h\Phi_2(f_2),\partial_\phi\Phi_3(f_3)+(-1)^k(\phi\circ\Phi_1(f_1)-\phi^*\circ\Phi_2 (f_2)));\\
\Phi\circ \delta_{\rm preLie}(f_1,f_2,f_3)
&=&(\Phi_1(\dM_\g f_1),\Phi_2(\dM_\h f_2),\Phi_3(\dM_\phi f_3)+(-1)^k\Phi_3(\phi\circ f_1-\phi^*\circ f_2)).
\end{eqnarray*}
By Theorem \ref{thm:isomorphism}, we have
\begin{eqnarray*}
\partial_\g\Phi_1(f_1)=\Phi_1(\dM_\g f_1),\quad\partial_\h\Phi_2(f_2)=\Phi_2(\dM_\h f_2),\quad \partial_\phi\Phi_3(f_3)=\Phi_3(\dM_\phi f_3).
\end{eqnarray*}
Thus $\delta_{\CE}\circ\Phi=\Phi\circ \delta_{\rm preLie}$ holds if and  only if $$\phi\circ\Phi_1(f_1)-\phi^*\circ\Phi_2 (f_2)=\Phi_3(\phi\circ f_1-\phi^*\circ f_2).$$	
By a direct calculation, we have
\begin{eqnarray*}
\phi\circ\Phi_1(f_1)(x_1,\cdots,x_{k+1},y)&=&\phi(f_1(x_1,\cdots,x_{k+1})(y)),\\
&=&(\phi\circ f_1)(x_1,\cdots,x_{k+1})(y)\\
&=&\Phi_3(\phi\circ f_1)(x_1,\cdots,x_{k+1},y),
\end{eqnarray*}
which implies that
 $$\phi\circ\Phi_1(f_1)=\Phi_3(\phi\circ f_1).$$
%Then for $\phi^*\circ f_2\in C_{\CE}^{k}(\g,\Hom(\g,\h))$,we have
%\begin{eqnarray*}
%\phi^*\circ\Phi_2 %(f_2)(x_1,\cdots,x_{k+1},y)&=&f_2(\phi(x_1),\cdots,\phi(x_{k+1}))(\phi(y)),\\
%\Phi_3(\phi^*\circ f_2)(x_1,\cdots,x_{k+1},y)&=&(\phi^*\circ %f_2)(x_1,\cdots,x_{k+1})(y)\\
%&=&f_2(\phi(x_1),\cdots,\phi(x_{k+1}))(\phi(y)),
%\end{eqnarray*}
Similarly, we also have
 $$\phi^*\circ\Phi_2 (f_2)=\Phi_3(\phi^*\circ f_2).$$
Thus $\delta_{\CE}\circ\Phi=\Phi\circ \delta_{\rm preLie}$ holds,  i.e. the map $\Phi$ is an isomorphic cochain map from the cochain complex $(C^{*}_{\CE}(\phi,\phi),\delta_{\CE})$ to the cochain complex $(C_{\rm preLie}^{*}(\phi,\phi),\delta_{\rm preLie})$.

The second conclusion follows directly.
\end{proof}

\vspace{3mm}
\noindent
{\bf Acknowledgements. } This research is supported by NSFC (11901501).

 \end{document}